\documentclass[oneside]{amsart}
\usepackage[utf8]{inputenc}
\usepackage{graphicx}
\usepackage{amsmath,amssymb}
\usepackage{setspace}
\usepackage{float}
\usepackage{geometry}
\usepackage{caption}
\usepackage{amssymb}
\usepackage{listings}
\usepackage{amsthm}
\usepackage{graphicx}
\usepackage{biblatex}
\usepackage{amsthm}
\usepackage{hyperref}
\usepackage{lineno}
\addbibresource{main.bib}

\newtheorem{theorem}{Theorem}
\newtheorem{proposition}[theorem]{Proposition}

\newtheorem{conjecture}[theorem]{Conjecture}
\newtheorem{lemma}[theorem]{Lemma}
\theoremstyle{definition}
\newtheorem{definition}[theorem]{Definition}

\title[The PCSC is true for the Kinoshita-Terasaka and Conway knot families]{The purely cosmetic surgery conjecture is true for the Kinoshita-Terasaka and Conway knot families}
\author{Bryan Boehnke, Conan Gillis, Hanwen Liu, Shuhang Xue}

\begin{document}
\begin{abstract}
    We show that all nontrivial members of the Kinoshita-Terasaka and Conway knot families satisfy the purely cosmetic surgery conjecture.
\end{abstract}
\maketitle

\section{The purely cosmetic surgery conjecture}
Let $K \subset S^3$ be a knot, and let $S^3_r(K)$ be the 3-manifold resulting from Dehn surgery on $K$ with surgery coefficient $r$. We consider the following conjecture:

\begin{conjecture}[Purely cosmetic surgery conjecture]
If $K$ is a nontrivial knot, then $S^3_r(K)$ and $S^3_s(K)$ are orientation-preserving diffeomorphic if and only if $r=s$.
\end{conjecture}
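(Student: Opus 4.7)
The plan is to arrange the argument as a cascade of obstructions that constrains the candidate surgery pairs $(r,s)$ with $r \ne s$, and then to eliminate what remains. First I would reduce to the hyperbolic setting: Moser's classification of Dehn surgeries on torus knots, together with the cyclic and finite surgery theorems of Culler--Gordon--Luecke--Shalen and Boyer--Zhang, handle Seifert-fibered and reducible fillings, so that $K$ may be assumed hyperbolic and both slopes $r,s$ taken to be non-exceptional. Gordon--Luecke rules out the trivial surgery as a possible value. In this range Thurston's hyperbolic Dehn surgery theorem applies, which will be important at the end.

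Next I would invoke the established chain of numerical obstructions. The Casson--Walker argument of Boyer--Lines forces $\Delta_K''(1)=0$. The $d$-invariants of Ozsv\'ath--Szab\'o, refined by Ni and Wu, then force any purely cosmetic pair to satisfy $\{r,s\}=\{p/q,-p/q\}$ with $q^2 \equiv -1 \pmod{p}$ and $\tau(K)=0$. Hanselman's immersed-curve obstruction tightens this to $p/q \in \{\pm 2, \pm 1/q\}$, with the $\pm 2$ case forcing $g(K)=2$ and the $\pm 1/q$ case bounding $q$ in terms of the Seifert genus and the thickness of $\widehat{HFK}(K)$. At this point, for each knot of fixed genus, only finitely many candidate slopes survive, and the problem has been reduced to showing that these few candidates are never realized.

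The decisive step --- and the reason the full conjecture is still open --- is eliminating this finite residue uniformly in $K$. I would pursue two complementary lines. On the Floer side, I would use the involutive correction terms of Hendricks--Manolescu, together with the full bordered invariant $\widehat{CFD}$ of the complement, to extract orientation-sensitive data strong enough to distinguish $S^3_{1/q}(K)$ from $-S^3_{-1/q}(K)$; the obstacle is that current sharpness results for $\iota$-invariants cover only restricted classes, so new computations for arbitrary $q$ would be needed. On the geometric side, for large $q$ Thurston's hyperbolic Dehn surgery theorem implies that the cores of the two fillings have distinct complex lengths, so any isometry between them must restrict to an orientation-reversing self-homeomorphism of $S^3 \setminus K$ exchanging the slopes; I would then appeal to Sakuma--Kojima's classification of symmetries of hyperbolic knot complements to rule this out, and treat small $q$ by direct invariant computation. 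The hardest part will be bridging the two regimes into a single argument that applies to every hyperbolic knot with $\tau=\Delta_K''(1)=0$; this is where the bulk of genuinely new work lies, and where I expect that unifying the immersed-curve picture with equivariant or instanton Floer data will be required.
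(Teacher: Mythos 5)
The statement you were asked to address is labelled a conjecture in the paper, and the paper does not prove it: the Purely Cosmetic Surgery Conjecture remains open, and the paper's actual contribution (Theorem \ref{thm:PCSCKTConway}) is only its verification for the Kinoshita--Terasaka and Conway families, via a bad-domain bound on knot Floer thickness, the Gabai and Ozsv\'ath--Szab\'o genus computations, Hanselman's criterion ($th(K)\leq 5$ and $g(K)\neq 2$), and, for the residual family $KT_{2,n}$, the Ichihara--Wu criterion $V_K'''(1)\neq 0$ computed from a skein recursion. Your proposal does not prove the conjecture either, and by your own account it cannot: you accurately summarize the known reduction (Boyer--Lines forcing $\Delta_K''(1)=0$, the Ni--Wu constraint $\{r,s\}=\{p/q,-p/q\}$ with $\tau(K)=0$, and Hanselman's narrowing of the slopes to $\pm 2$ and $\pm 1/q$), but at the decisive step you write that new computations ``would be needed'' and that ``the bulk of genuinely new work lies'' there. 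An argument whose key step is deferred to unspecified future work has a gap at exactly that step: nothing in your outline actually distinguishes $S^3_{1/q}(K)$ from $S^3_{-1/q}(K)$ for a general hyperbolic knot with $\tau(K)=\Delta_K''(1)=0$, and the geometric alternative (core geodesics plus the Kojima--Sakuma symmetry classification) requires an effective version of ``large $q$'' that you do not supply and that is not currently known to cover all remaining cases.

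If your goal was instead to reprove what the paper actually establishes, your approach is genuinely different from, and far heavier than, the paper's. The paper never passes through hyperbolic geometry, $d$-invariants, or the Ni--Wu slope constraints: it applies Hanselman's theorem as a black box once it has checked $th(K)\leq 2$ and $g(K)\neq 2$ for all but the knots $KT_{2,n}$, and it dispatches those by an elementary Jones polynomial computation showing $V_{KT_{2,n}}'''(1)=576n\neq 0$ together with the mirror symmetry $KT_{2,-n}=m(KT_{2,n})$. I would encourage you to restrict your ambition to a specific family of knots, where the finite-residue elimination you correctly identify as the hard part can actually be carried out by direct computation of $V_K''(1)$ or $V_K'''(1)$, as the paper does.
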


We will refer to this in the sequel as the PCSC. It has been verified for several families of knots, recently including pretzel knots (\cite{StipsiczSzabo}) and cable knots (\cite{Tao}). Our main result is the verification of the PCSC for the families of Kinoshita-Terasaka knots and their mutants, Conway knots:

\begin{theorem}\label{thm:PCSCKTConway}
    The purely cosmetic surgery conjecture holds for all nontrivial Kinoshita-Terasaka and Conway knots.
\end{theorem}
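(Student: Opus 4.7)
The plan is to combine three layers of obstructions to purely cosmetic surgeries. The first is the classical Boyer--Lines obstruction: if $\Delta_K''(1)\neq 0$, then $K$ admits no purely cosmetic surgeries. The second is the slope-rigidity package of Ozsv\'ath--Szab\'o and Ni--Wu, which states that any pair of cosmetic surgery slopes on a knot must take the form $\pm p/q$ and forces $\tau(K)=0$. The third is Hanselman's refinement from bordered Floer theory, which bounds the cosmetic slope numerators in terms of the Seifert genus $g(K)$ and, for small genus, reduces the candidates to an explicit short list.

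Next I would fix explicit parameterizations of the two families via their standard tangle decompositions: both the Kinoshita--Terasaka and Conway knots are obtained by inserting twist regions into a fixed external tangle, and differ only by a mutation of an internal tangle. Since the Alexander polynomial is a mutation invariant, a single skein/Conway-polynomial computation on the tangle decomposition should yield $\Delta(t)$ for both families simultaneously as an explicit polynomial in the twist parameters. From this I would extract $\Delta_K''(1)$ as a polynomial in the parameters and show that its zero locus is a small, explicit set of exceptional parameter values---necessarily containing the standard KT and Conway knots, for which $\Delta\equiv 1$. On the complement of this locus Boyer--Lines immediately yields the PCSC for both families at once.

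For the finitely many exceptional members, the plan is to deploy the Hanselman/Ni--Wu bounds. This requires controlling the Seifert genus of each such knot, for which I would combine explicit Seifert surfaces coming from the tangle decomposition with Ozsv\'ath--Szab\'o's genus detection via knot Floer homology. For members of sufficiently large genus, Hanselman's bound eliminates all candidate cosmetic slopes. For the remaining small-genus members (including the standard KT, whose genus Gabai showed to be $2$), the bound restricts us to a finite list of candidate slope pairs $\{p/q,-p/q\}$, which I would rule out by computing and comparing the Heegaard Floer correction terms $d\bigl(S^3_r(K)\bigr)$ at the two slopes, or by comparing an auxiliary invariant such as the Casson--Walker invariant.

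The main obstacle will be the standard Kinoshita--Terasaka and Conway knots themselves. Their trivial Alexander polynomial eliminates the Boyer--Lines input entirely, and the known genus-$2$ value for KT places us exactly at the borderline of Hanselman's bound, forcing a direct Heegaard Floer calculation. Moreover, since knot Floer homology is not preserved under mutation, Conway cannot inherit the verification from KT and must be treated independently---this is the computationally heaviest step, likely requiring invocation of explicit computations of $CFK^{\infty}$, or at least of the targeted $d$-invariants, for each of the two knots separately.
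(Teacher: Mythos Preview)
Your first layer collapses entirely: the defining property of the Kinoshita--Terasaka family is that \emph{every} member $KT_{r,n}$ has trivial Alexander polynomial $\Delta\equiv 1$, and the Conway mutants $C_{r,n}$ inherit this. Thus $\Delta_K''(1)=0$ identically across both families, and the Boyer--Lines obstruction rules out nothing. The ``small zero locus'' you anticipate is in fact the entire parameter space, so after your first step you are still left with all infinitely many knots and no filter.

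This also undermines your third layer as written. Hanselman's restriction on candidate cosmetic slopes depends on the thickness $th(K)$ as well as on $g(K)$; large genus alone does not eliminate the $\pm 1/q$ candidates without a thickness bound, which your plan never supplies and which cannot be extracted knot-by-knot for an infinite family. The paper's argument fills exactly this gap: it establishes a \emph{uniform} bound $th(K)\le 2$ for all nontrivial $KT_{r,n}$ and $C_{r,n}$ by counting bad domains in a fixed diagram template and invoking the Stipsicz--Szab\'o inequality $th(K)\le\tfrac12 B(D)$. Combined with $g(KT_{r,n})=r$ (Gabai) and $g(C_{r,n})=2r-1$ (Ozsv\'ath--Szab\'o), Hanselman's criterion ($th\le 5$ and $g\ne 2$) then disposes of every Conway knot and every $KT_{r,n}$ with $r\ne 2$ in one stroke. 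The sole residual subfamily $KT_{2,n}$ (genus $2$) is handled not by $d$-invariants or Casson--Walker but by the Ichihara--Wu Jones polynomial test: a skein recursion gives $V_{KT_{2,n}}'''(1)=576n\ne 0$ for all $n\ne 0$.
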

\noindent \textbf{Acknowledgments:} This paper was completed through the BSM Summer Undergraduate Research Program under the supervision of Dr. András Stipsicz.

\section{Knot invariants and the PCSC}
\subsection{The Alexander polynomial}
For a knot $K \subset S^3$, its Alexander polynomial is an integral Laurent polynomial $\Delta_K (t) \in \mathbb{Z}[t, t^{-1}]$, constructed using a diagram for $K.$ To begin this construction, we first define a marked knot diagram $(D,p)$ to be an oriented knot diagram $D$ with a distinguished edge $e$ marked by a point $p$ on $e$ (Figure 1 shows a marked knot diagram for the left-handed trefoil). 

\begin{figure}
    \centering
    \includegraphics[scale=0.20]{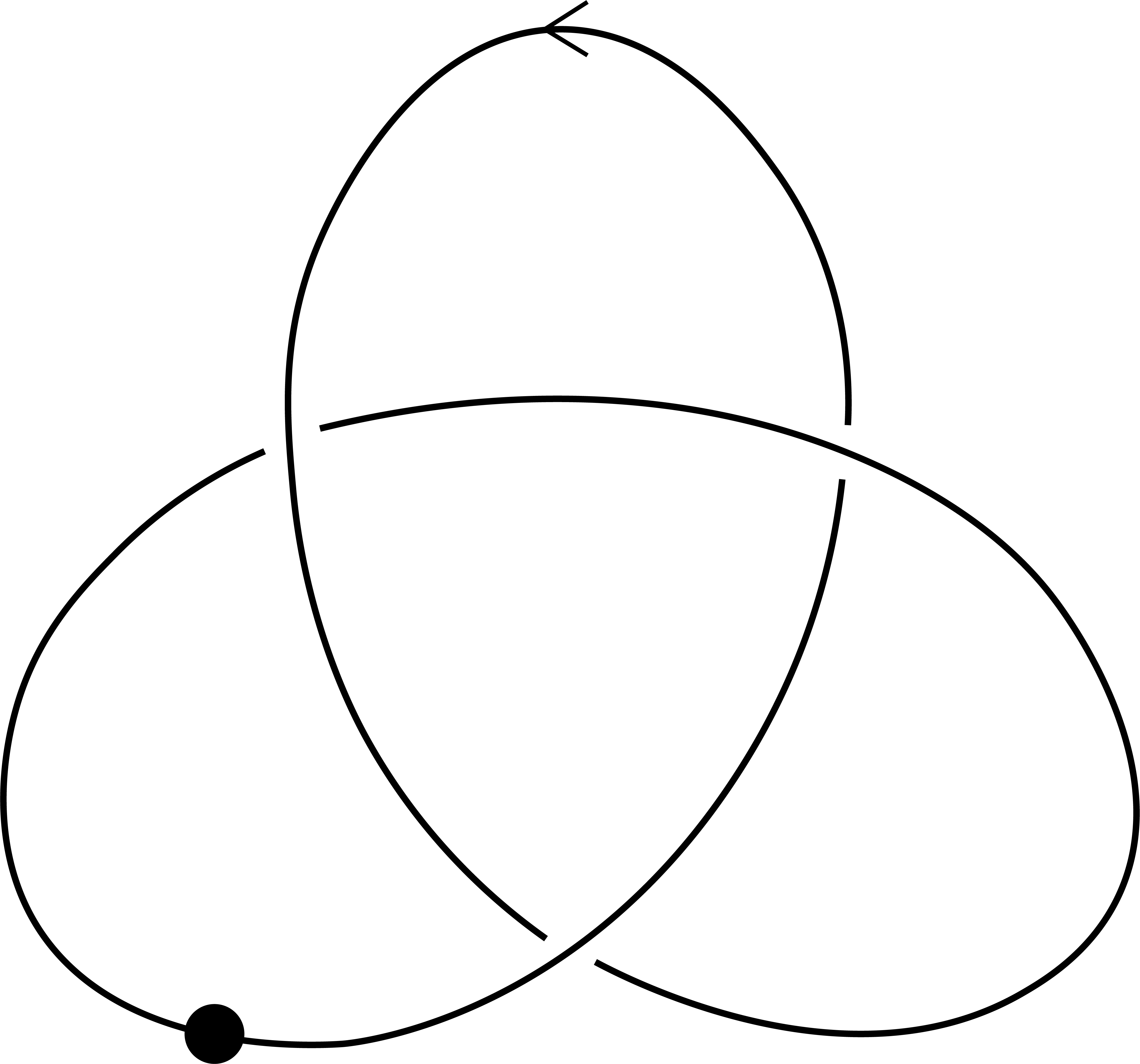}
    \caption{A marked knot diagram of the left-handed trefoil knot.}
    \label{fig:trefoil}
\end{figure}

In such a marked diagram, let $Cr(D,p)$ denote the set of crossings and let $Dom(D,p)$ denote the set of domains in the plane which do not have the marking $p$ on their boundary. Note that for a diagram with $n$ crossings, there are $n+2$ domains with $n$ in $Dom(D,p)$ disjoint from the marking $p$.

\begin{definition}
A Kauffman state $\kappa$ is defined as a bijection $\kappa: \emph{Cr}(D,p) \rightarrow \emph{Dom}(D,p)$ in a marked knot diagram $(D, p)$ such that $\kappa(c)$ is a quadrant around $c \in Cr(D,p).$ We denote $K(D,p)$ as the set of all such Kauffman states for a marked diagram $(D,p).$
\end{definition}

To a Kauffman state $\kappa$, we associate two integer quantities: 
\begin{equation} \label{localGradings}
    A(\kappa) = \sum_{c_i \in Cr(D)} A(\kappa(c_i))\qquad
    M(\kappa) = \sum_{c_i \in Cr(D)} M(\kappa(c_i)),
\end{equation}
where the local contributions $A(\kappa(c_i)) \in \{0, \pm{\frac{1}{2}}\}$ and $M(\kappa(c_i)) \in \{0, \pm{1}\}$ are defined in Figure \ref{fig:A and M gradings} for each crossing $c_i \in Cr(D,p)$. $A(\kappa)$ and $M(\kappa)$ are called the Alexander and Maslov gradings respectively.

\begin{figure}
    \centering
    \includegraphics[scale=0.35]{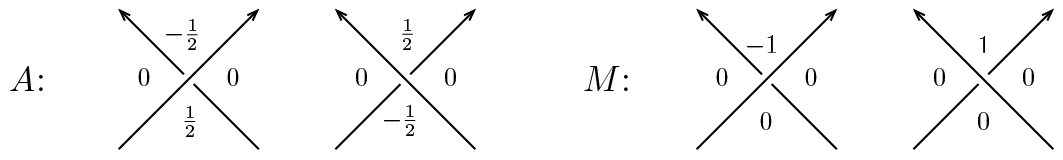}
    \caption{Local contributions to the Alexander and Maslov gradings at a crossing.}
    \label{fig:A and M gradings}
\end{figure}

\begin{definition} \label{def:alexander}
The Alexander polynomial is defined as
\begin{equation}\label{AlexanderPolynomial}
    \Delta_K (t) = \sum_{\kappa \in K(D,p)} (-1)^{M(\kappa)} \cdot t^{A(\kappa)}.
\end{equation}
\end{definition}

It can be shown that this construction yields the same polynomial for all marked diagrams $(D,p)$ of the same knot and all choices of marking $p$ on such diagrams. Hence, the Alexander polynomial is indeed a knot invariant.

\begin{figure}
    \centering
    \includegraphics[width=7cm]{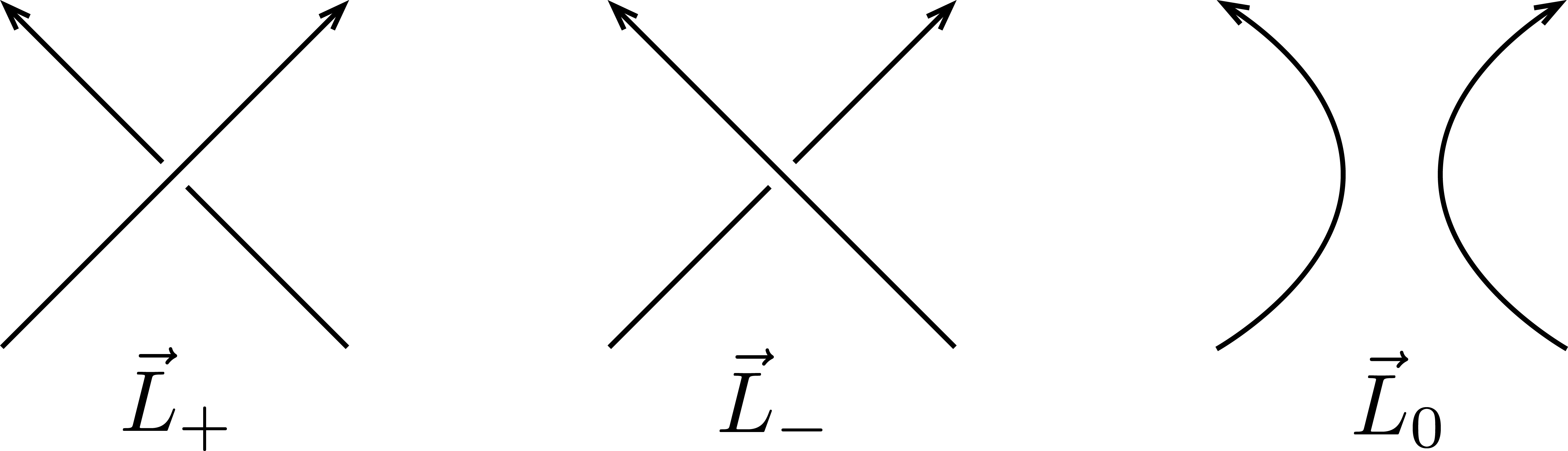}
    \caption{A skein triple.}
    \label{fig:skein0relation}
\end{figure}

Furthermore, for three oriented links $\Vec{L}_{+}$, $\Vec{L}_{-},$ and $\Vec{L}_{0}$  which correspond to diagrams that differ only at a single crossing (see Figure \ref{fig:skein0relation}), the Alexander polynomial satisfies the following equation called the \textit{skein relation}.
\begin{equation}\label{SkeinRelationDelta}
\Delta_{\Vec{L}_{+}(t)} - \Delta_{\Vec{L}_{-}}(t) = (t^{1/2} - t^{-1/2})\cdot \Delta_{\Vec{L}_{0}}(t).
\end{equation}
Together with the normalization $\Delta_{\emph{u}} = 1$ for the unknot $u$, the skein relation can be used to give an equivalent definition of the Alexander polynomial, which is often easier to compute. 

In \cite{KinoshitaTerasaka}, the Kinoshita-Terasaka knots were introduced as a family of nontrivial knots that share Alexander polynomial with the unknot, that is, $\Delta_K(t) = 1$. The Conway knots, related to the Kinoshita-Terasaka knots by mutation, share this property.

\subsection{The Jones polynomial}
Similar to the Alexander polynomial, the Jones polynomial is also a Laurent polynomial knot invariant in $\mathbb{Z}[q,q^{-1}].$  Like the Alexander polynomial, the Jones polynomial can be defined with the use of a skein relation, combined with a normalization.

\begin{definition}
For a knot $K$, the Jones polynomial is the unique Laurent polynomial $V_K(q) \in\mathbb{Z}[q,q^{-1}]$ satisfying the skein relation
\begin{equation}\label{SkeinRelationV}
q^2V_{\Vec{L}_{+}}(q) - q^2V_{\Vec{L}_{-}}(q) = (q^{-1} - q)\cdot V_{\Vec{L}_{0}}(q)
\end{equation}
for links $\Vec{L}_+,\Vec{L}_-,\Vec{L}_0$ defined above and the normalization $V_{\emph{u}} = 1$, where $u$ is the unknot.
\end{definition}

It can be shown that this polynomial is independent of the choice of the diagram for $K.$ Additionally, the Jones polynomial is more sensitive at detecting mirrors of knots, which the Alexander polynomial cannot. Specifically, if we denote the mirror of a knot $K$ as $m(K)$, then the Jones polynomial satisfies $V_{m(K)}(q) = V_K(q^{-1})$. This relationship is used later in our paper.

Another useful relationship connects values of the Alexander and Jones polynomials in the following way.

\begin{lemma}[Ichihara and Wu, \cite{IchiharaWu}]\label{relationship}
For all knots $K \subset S^3$,
\begin{equation}\label{V&Delta Relationship}
V_K^{\prime \prime}(1) = -3 \Delta_K^{\prime \prime}(1).
\end{equation}
\end{lemma}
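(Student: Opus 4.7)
The plan is to verify the identity by induction on the unknotting number of $K$, using the two skein relations (\ref{SkeinRelationDelta}) and (\ref{SkeinRelationV}) in tandem. Define $F(K) := V_K''(1) + 3\Delta_K''(1)$; the goal reduces to showing $F(K) = 0$ for every knot $K$. The base case is immediate: the unknot $u$ has $\Delta_u(t) = V_u(q) = 1$, so $F(u) = 0$.

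For the inductive step, pick any crossing in a diagram of $K$ and form the skein triple $(K_+, K_-, L_0)$, where $K_\pm$ are knots and $L_0$ is a $2$-component link. I would differentiate each of (\ref{SkeinRelationDelta}) and (\ref{SkeinRelationV}) twice in the variable and evaluate at $1$. Because both prefactors $t^{1/2}-t^{-1/2}$ and $q^{-1}-q$ vanish at the evaluation point, the Leibniz expansion kills the $\Delta_{L_0}''(1)$ and $V_{L_0}''(1)$ contributions, leaving expressions for $\Delta_{K_+}''(1) - \Delta_{K_-}''(1)$ and $V_{K_+}''(1) - V_{K_-}''(1)$ in terms only of values and first derivatives of $\Delta_{L_0}$ and $V_{L_0}$ (together with the familiar normalizations $\Delta_K(1) = V_K(1) = 1$ and $\Delta_K'(1) = V_K'(1) = 0$ on the knots $K_\pm$).

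The remaining inputs come from the behavior of the two polynomials on $L_0$. For a $2$-component link $L$ with linking number $\ell$, the Conway expansion $\nabla_L(z) = \ell z + O(z^3)$ gives $\Delta_L(1) = 0$ and $\Delta_L'(1) = \ell$. The analogous values $V_L(1) = -2$ and a linear-in-$\ell$ formula for $V_L'(1)$ follow by applying the Jones skein relation to a $2$-component-to-$2$-component crossing change and inducting on linking number. Substituting all these ingredients, the difference $F(K_+) - F(K_-)$ becomes an affine function of $\ell(L_0)$ whose coefficients I expect to cancel exactly, so that $F$ descends through every crossing change.

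The step I expect to be the main obstacle is this final cancellation: one must read off and match all the numerical coefficients produced by the two Leibniz expansions, pinning down $V_L'(1)$ for $2$-component links and applying the correct sign/normalization conventions of (\ref{SkeinRelationV}). Once invariance of $F$ under crossing changes is established, any sequence of crossing changes from $K$ to $u$ yields $F(K) = F(u) = 0$, giving the claimed identity $V_K''(1) = -3\Delta_K''(1)$.
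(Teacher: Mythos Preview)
The paper does not actually prove this lemma; it is simply quoted with attribution to Ichihara and Wu, so there is no in-paper argument to compare against.

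That said, your skein-theoretic strategy is sound and the cancellation you anticipate does occur. Working in the standard $t$-variable (so that the Jones skein relation is $t^{-1}V_{L_+}-tV_{L_-}=(t^{1/2}-t^{-1/2})V_{L_0}$; note that equation~(\ref{SkeinRelationV}) as printed has a typo, both prefactors being $q^2$, whereas the relation actually used later in the proof of Lemma~\ref{lemma:nonzerothirdderiv} is $q^{-2}V_+-q^2V_-=(q^{-1}-q)V_0$), one obtains for a skein triple with $K_\pm$ knots and $L_0$ a $2$-component link of linking number $\ell$:
\[
\Delta_{K_+}''(1)-\Delta_{K_-}''(1)=2\ell,\qquad V_{K_+}''(1)-V_{K_-}''(1)=-6\ell,
\]
whence $F(K_+)-F(K_-)=-6\ell+3\cdot 2\ell=0$ exactly as you hoped. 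The inputs you need along the way are $V_K(1)=1$ and $V_K'(1)=0$ for knots (the latter itself follows from the same crossing-change induction), together with $V_{L_0}(1)=-2$ and $V_{L_0}'(1)=-3\ell$ for $2$-component links. That last identity is the one substantive fact you have to establish: it follows by checking that $V_L'(1)$ is unchanged under crossing changes within a single component and changes by $-3$ under a crossing change between the two components, then normalizing with the $2$-component unlink. Once that is in hand your induction on unknotting number closes.
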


As a final note, Ichihara and Wu developed the following sufficient condition on the Jones polynomial for the PCSC to hold for $K$:

\begin{theorem}[Ichihara and Wu, \cite{IchiharaWu}]\label{IchiharaWuThm}
For a knot $K$, if either $V_K''(1)\neq 0$ or $V_K'''(1)\neq 0,$ then the PCSC holds true.
\end{theorem}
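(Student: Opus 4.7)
The plan is to argue by contrapositive: assume $K$ admits a purely cosmetic surgery pair $S^3_r(K)\cong S^3_s(K)$ with $r\neq s$ as oriented manifolds, and deduce that both $V_K''(1)=0$ and $V_K'''(1)=0$. I would begin by restricting the slopes. A theorem of Ni--Wu (refining earlier work of Boyer--Lines and Matignon--Sayari) forces any purely cosmetic pair to satisfy $r=p/q$ and $s=-p/q$ for coprime integers $p,q$, so the hypothesis reduces to $S^3_{p/q}(K)\cong S^3_{-p/q}(K)$, and every orientation-preserving invariant of 3-manifolds must take equal values on the two sides.

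The key tools are two finite-type invariants of rational homology spheres whose surgery formulas encode classical polynomial invariants of $K$. First, the Casson--Walker invariant $\lambda$ satisfies a surgery formula expressing $\lambda\bigl(S^3_{p/q}(K)\bigr)$ as $\lambda(L(p,q))$ plus a term proportional to $\frac{q}{p}\Delta_K''(1)$. Since $L(p,-q)$ is the orientation-reverse of $L(p,q)$ and $\lambda$ is odd under orientation reversal, comparing $\lambda$ on the two surgeries recovers the Boyer--Lines constraint $\Delta_K''(1)=0$; Lemma \ref{relationship} then yields $V_K''(1)=0$. Second, I would apply a degree-3 finite-type invariant --- for instance the degree-3 piece of the LMO invariant or Ohtsuki's $\lambda_2$ --- whose surgery formula on $S^3_{p/q}(K)$ involves the Vassiliev invariants $v_2(K)$ and $v_3(K)$, which in turn can be expressed through $\Delta_K''(1)$, $V_K''(1)$, and $V_K'''(1)$. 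Substituting the already-established $V_K''(1)=0$, the identity between the invariant at $p/q$ and at $-p/q$ collapses to a nonvanishing rational function of $p,q$ times $V_K'''(1)$, forcing $V_K'''(1)=0$.

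The main obstacle will be pinning down the precise degree-3 surgery formula and analyzing its behavior under $q\mapsto -q$. Specifically, one must confirm that the purely lens-space contributions cancel between the $p/q$ and $-p/q$ computations (using the orientation-reversal properties of the chosen invariant), and that the coefficient of $V_K'''(1)$ appearing after the comparison does not vanish for any slope $p/q$ surviving the Ni--Wu restriction $q^2\equiv -1\pmod p$. Once this bookkeeping is carried out, combining it with the Casson--Walker step establishes the contrapositive of the theorem.
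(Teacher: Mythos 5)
This theorem is quoted from Ichihara and Wu's paper and is not proved in the text you were given, so there is no in-paper argument to compare against; I can only assess your reconstruction against the published proof. Your outline does track the actual Ichihara--Wu strategy: restrict the slopes to $\pm p/q$ with $q^2\equiv -1 \pmod p$ via Ni--Wu, use the Casson--Walker surgery formula to recover Boyer--Lines' constraint $\Delta_K''(1)=0$ (hence $V_K''(1)=0$ by Lemma \ref{relationship}), and then compare a degree-3 finite-type invariant of the two surgered manifolds --- Ichihara and Wu use $\lambda_2$, the degree-2 part of the LMO invariant, via the Bar-Natan--Lawrence rational surgery formula --- to force the vanishing of a degree-3 Vassiliev invariant of $K$ that is a linear combination of $V_K''(1)$ and $V_K'''(1)$. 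So the skeleton is right.

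However, as written this is a plan rather than a proof, and the two steps you defer are exactly where the content lives. First, the lens-space terms do not ``cancel by orientation reversal'' in the naive sense: since $\lambda(L(p,-q))=-\lambda(L(p,q))$, equating $\lambda(S^3_{p/q}(K))$ with $\lambda(S^3_{-p/q}(K))$ yields $2\lambda(L(p,q))=-\tfrac{q}{p}\Delta_K''(1)$, which is not yet $\Delta_K''(1)=0$. One must invoke the Ni--Wu condition $q^2\equiv-1\pmod p$, which gives $-q\equiv q^{-1}\pmod p$, together with the Dedekind-sum symmetries $s(-q,p)=-s(q,p)$ and $s(q^{-1},p)=s(q,p)$, to conclude $s(q,p)=0$ and only then $\Delta_K''(1)=0$; an analogous and more involved bookkeeping is needed for the degree-3 invariant. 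Second, the nonvanishing of the coefficient of $V_K'''(1)$ in the resulting identity is the crux of Ichihara and Wu's argument and cannot simply be asserted; without writing down the surgery formula and checking that coefficient on all slopes surviving the Ni--Wu restriction, the proof is incomplete. In short: correct roadmap, but the decisive computations are missing.
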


\subsection{Knot Floer homology and thickness} \label{section:homology}
While the Alexander polynomial is a useful knot invariant, we are aided by a more sensitive invariant when verifying the PCSC for the Kinoshita-Terasaka and Conway knots. This invariant is knot Floer homology, first developed by Ozsváth and Szabó in \cite{OzsvathAndSzabo}. 

In Definition \ref{def:alexander}, we can note that information from the Maslov and Alexander gradings are contained within the definition of the Alexander polynomial. Given a marked diagram $(D,p)$ for a knot $K,$ we can similarly construct the two-variable polynomial
\begin{equation}
    G(s,t) = \sum\limits_{\kappa\in K(D,p)} s^{M(\kappa)} \cdot t^{A(\kappa)}.
\end{equation}
We note that this more sensitive polynomial contains all of the information of the Alexander polynomial, since $G(-1,t) = \Delta_K(t)$. However, $G(s,t)$ is not a knot invariant. For instance, $G(1,1)$ represents the number of Kauffman states, a quantity which can vary based on the diagram of the knot.

To obtain a knot invariant from $G(s,t)$, we consider the vector space $C_{D,p}$ generated over the field $\mathbb{F}$ of two elements by the Kauffman states of the diagram $(D,p)$.  When the basis is equipped with the bigrading $(M,A)$, it becomes a bigraded vector space with graded dimension $G(s,t)$. In \cite{OzsvathAndSzabo}, Ozsváth and Szabó constructed a boundary map to establish a chain complex $(C_{D,p}, \partial)$ and define the homology group.

\begin{theorem}[Ozsváth and Szabó, \cite{OzsvathAndSzabo}]\label{OzsvathAndSzabo}
There exists a linear map  $\partial: C_{D,p} \longrightarrow C_{D,p}$ such that $\partial^2=0$ and the bigraded homology group $H(C_{D,p}, \partial) = Ker \partial / Im \partial$ is a knot invariant.
\end{theorem}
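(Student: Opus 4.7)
The plan is to realize the Kauffman state complex $C_{D,p}$ as a special case of the Heegaard Floer chain complex $\widehat{CFK}(K)$ built from a specific Heegaard diagram, and then inherit $\partial^2=0$ and invariance from that theory. First, I would associate to the marked diagram $(D,p)$ a doubly-pointed Heegaard diagram $(\Sigma,\boldsymbol{\alpha},\boldsymbol{\beta},w,z)$ representing $K \subset S^3$. Concretely, take a tubular neighborhood of the projection of $D$, use its boundary as a genus-$n$ surface $\Sigma$ (where $n$ is the crossing number), let the $\alpha$-curves encircle the bounded domains disjoint from $p$, let the $\beta$-curves be determined locally at each crossing by the standard recipe, and place the basepoints $w,z$ on either side of $p$ on the distinguished edge. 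By a direct combinatorial count, the intersection points of $\mathbb{T}_\alpha = \alpha_1\times\cdots\times\alpha_n$ with $\mathbb{T}_\beta = \beta_1\times\cdots\times\beta_n$ inside $\mathrm{Sym}^n(\Sigma)$ are in canonical bijection with Kauffman states in $K(D,p)$, and a local analysis at each crossing shows that the induced absolute Maslov and Alexander gradings agree with $M(\kappa)$ and $A(\kappa)$ in~\eqref{localGradings}.

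With this identification in hand, I define the differential by counting pseudoholomorphic Whitney disks: for generators $\mathbf{x},\mathbf{y}$ set
\begin{equation*}
\partial \mathbf{x} \;=\; \sum_{\mathbf{y}}\sum_{\substack{\phi \in \pi_2(\mathbf{x},\mathbf{y})\\ \mu(\phi)=1,\ n_w(\phi)=n_z(\phi)=0}} \#\bigl(\mathcal{M}(\phi)/\mathbb{R}\bigr)\cdot \mathbf{y},
\end{equation*}
where $\mathcal{M}(\phi)$ is the moduli space of holomorphic representatives with respect to a generic path of almost complex structures on $\mathrm{Sym}^n(\Sigma)$. The Maslov-index condition ensures $\partial$ lowers $M$ by one while the condition $n_w=n_z=0$ preserves $A$, so the bigrading descends to homology. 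To show $\partial^2=0$, I would examine the compactification of the one-dimensional moduli spaces with $\mu(\phi)=2$ and $n_w=n_z=0$: by Gromov compactness together with the standard gluing theorem, the ends of such a space correspond bijectively to broken flowlines $\mathbf{x}\to\mathbf{z}\to\mathbf{y}$, whose count mod $2$ is the coefficient of $\mathbf{y}$ in $\partial^2\mathbf{x}$ and must therefore vanish.

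The remaining step is to show that the bigraded homology depends only on $K$ and not on the marked diagram $(D,p)$ or on the auxiliary choices (Heegaard diagram, almost complex structures). Any two marked diagrams of $K$ are related by Reidemeister moves together with a change of marking, and on the Heegaard side these translate into a finite sequence of elementary moves: isotopies of $\boldsymbol{\alpha}$ and $\boldsymbol{\beta}$, handleslides, and index one/two stabilizations. For each elementary move I would construct an explicit chain map between the two associated complexes, typically as a triangle-counting map associated to a suitable Heegaard triple diagram, and prove that it is a quasi-isomorphism inverted up to chain homotopy. Composing across a sequence of moves yields a canonical isomorphism on homology depending only on $K$; a parallel argument handles the change of marking.

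The main obstacle is the analytic foundation underlying both $\partial^2=0$ and the invariance step: one must establish transversality for the moduli spaces of disks and triangles in $\mathrm{Sym}^n(\Sigma)$, control their compactifications via Gromov convergence, and prove the gluing theorems that identify the codimension-one ends. These technical inputs, carried out in~\cite{OzsvathAndSzabo}, are what promote the combinatorially-defined complex $(C_{D,p},\partial)$ to a genuine knot invariant, and for the purposes of the present paper we invoke their conclusions without reproof.
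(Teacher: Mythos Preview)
The paper does not supply its own proof of this theorem; it is stated as a result of Ozsv\'ath and Szab\'o and attributed to~\cite{OzsvathAndSzabo} without argument. Your sketch is a faithful outline of the original Ozsv\'ath--Szab\'o construction (Heegaard diagram from the projection, Kauffman states as generators of $\widehat{CFK}$, holomorphic disk differential, $\partial^2=0$ via compactness and gluing, invariance via Heegaard moves), so there is nothing in the paper to compare against and no gap on your side beyond the analytic inputs you already flag as being quoted from the source.
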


The homology group $\widehat{\text{HFK}}(K) = H(C_{D,p}, \partial)$ is called the knot Floer homology group of the knot $K$. 

\begin{definition}
The $\delta$-grading of a homogeneous element $x \in \widehat{\text{HFK}}(K)$ of bidegree $(M,A)$ is defined as the difference between the Maslov and Alexander gradings
\begin{equation}
    \delta(x) = M(x) - A(x).
\end{equation}
\end{definition}

We observe that $\delta(x)$ is an integer, as it is defined as the difference of the integer-valued Alexander and Maslov gradings. This $\delta$-grading allows us to consider the graded vector space $\widehat{\text{HFK}}^{\delta}(K)$, and from this $\delta$-graded knot Floer homology, we can introduce another new knot invariant:

\begin{definition} \label{def:thickness}
The \textit{thickness of a knot} \emph{K} is given by
\begin{center}
    $th(K) = max\{|\delta(x) - \delta(x')| \mid x, x' \in \widehat{\text{HFK}}^\delta (K)\; homogeneous\}$.
\end{center}
\end{definition} 

Since the knot Floer homology group is itself a knot invariant, we have that the $\delta$-graded knot Floer homology group and the thickness of a knot are knot invariants as well. Also, we can note that the thickness of the knot is an integer from its definition as the difference of integer-valued $\delta$-gradings.
 
We can similarly associate a quantity of thickness to the chain complex $(C_{D,p} , \partial)$, defined as the maximal difference between $\delta$-gradings of elements of the chain complex.

\begin{definition}
The \textit{thickness of a chain complex} $(C_{D,p}, \partial)$ (generated by Kauffman states of a marked diagram $(D,p)$ for a knot $K$ over the field $\mathbb{F}$) is given by
\begin{center}
    $th(C_{D,p}) = max\{|\delta(x) - \delta(x')| \mid x, x' \in C_{D,p}\; homogeneous\}$.
\end{center}
\end{definition}

The thickness of the chain complex is not a knot invariant as it is dependent on the diagram of the knot, although it can be used to bound the thickness of the knot in the following way:

\begin{lemma} \label{lemma:thicknessineq}
Let $th(C_{D,p})$ be the thickness of a chain complex $(C_{D,p}, \partial)$ (generated by Kauffman states of a marked diagram $(D,p)$ for a knot $K$ over the field $\mathbb{F}$) and let $th(K)$ be the thickness of the same knot $K$. Then $th(C_{D,p}) \geq th(K)$.
\end{lemma}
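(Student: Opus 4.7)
The plan is to exploit the fact that homology inherits the bigrading from the chain complex, so every $\delta$-grading realized in $\widehat{\mathrm{HFK}}(K)$ is already realized by some element of $C_{D,p}$ with the same $\delta$-grading. From there, the inequality on maxima is immediate.

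First, I would recall (from the construction of Theorem \ref{OzsvathAndSzabo}) that the boundary map $\partial$ has $(M,A)$-bidegree $(-1,0)$: it decreases the Maslov grading by one and preserves the Alexander grading. This means $\partial$ respects the bigraded decomposition $C_{D,p}=\bigoplus_{M,A} C_{M,A}$, so the homology $\widehat{\mathrm{HFK}}(K) = H(C_{D,p},\partial)$ is itself bigraded. In particular, every nonzero homogeneous class $[x] \in \widehat{\mathrm{HFK}}(K)$ has a well-defined bidegree $(M,A)$ and hence a well-defined $\delta$-grading $\delta([x]) = M - A$.

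Second, I would argue that any homogeneous class $[x] \in \widehat{\mathrm{HFK}}(K)$ of bidegree $(M,A)$ can be represented by a homogeneous cycle $x \in C_{D,p}$ of the same bidegree. Indeed, given any representative, projecting onto the $(M,A)$-summand produces a chain that is still a cycle (since $\partial$ respects the bigrading) and still represents $[x]$. This representative then satisfies $\delta(x) = M - A = \delta([x])$.

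Third, the inequality follows by a direct comparison. For any two homogeneous classes $[x],[x'] \in \widehat{\mathrm{HFK}}^\delta(K)$, the previous step produces homogeneous cycle representatives $x, x' \in C_{D,p}$ with $\delta(x) = \delta([x])$ and $\delta(x') = \delta([x'])$, so
\[
|\delta([x]) - \delta([x'])| = |\delta(x) - \delta(x')| \leq th(C_{D,p}).
\]
Taking the maximum over all pairs of homogeneous classes in $\widehat{\mathrm{HFK}}^\delta(K)$ yields $th(K) \leq th(C_{D,p})$.

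The main subtlety is the first step: one needs to know that $\partial$ respects the bigrading, which is why the homology carries a bigrading at all. This is a standard feature of the Ozsv\'ath--Szab\'o construction invoked in Theorem \ref{OzsvathAndSzabo} and is the only nontrivial input; once it is in hand, the remainder of the argument is purely a bookkeeping observation that the $\delta$-gradings appearing on the homology side form a subset of those appearing on the chain level.
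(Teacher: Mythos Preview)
Your argument is correct and follows the same approach as the paper: both observe that every $\delta$-grading realized in $\widehat{\mathrm{HFK}}(K)$ is already realized at the chain level, so the maximum spread can only shrink when passing to homology. Your version is in fact more carefully written --- you make explicit why the bigrading descends to homology (via the bidegree of $\partial$) and give a clean representative argument, whereas the paper's proof is somewhat informal, simply noting that passing to the subcomplex of cycles and then quotienting by boundaries can only reduce the set of $\delta$-gradings under consideration.
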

\begin{proof}
We note that the thickness of a knot is based on the $\delta$-graded knot Floer homology which involves a subcomplex of the chain complex $(C_{D,p}, \partial)$ (of cycles). As this subcomplex has potentially fewer elements $x$ and $x'$ to consider in the definition of the thickness of a chain complex, this subcomplex can potentially have smaller thickness. Furthermore, the $\delta$-graded knot Floer homology is the result of taking the quotient of the subcomplex of cycles with the subcomplex of boundaries, which can similarly decrease the thickness. Therefore, $th(C_{D,p}) \geq th(K)$. 
\end{proof}

\begin{figure}
    \centering
    \includegraphics[scale=0.3]{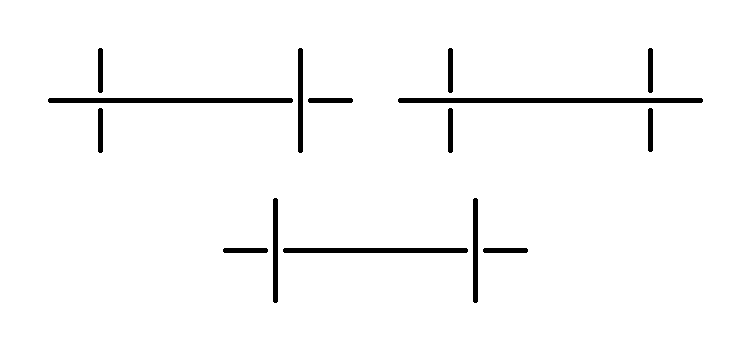}
    \caption{A horizontal good edge in the top left and two horizontal bad edges.}
    \label{fig:goodbadedges}
\end{figure}

Let $(D, p)$ be a marked knot diagram for a knot $K$. Notice that for a domain $\mathcal{D}$ in the diagram, the boundary of $\mathcal{D}$ is comprised of a sequence of arcs in the knot, called edges, with exactly two crossings at the two ends of each edge. We define an edge to be good if the two crossings are different when traversed along the edge, that is, the edge crosses over at one crossing and under at the other. An edge is defined to be bad otherwise. Figure \ref{fig:goodbadedges} gives examples of good and bad edges. A domain is good if all of the edges on its boundary are good, and a domain is bad if there is a bad edge on its boundary.

Using these bad domains, Stipsicz and Szabó were able to bound the thickness of a chain complex above by half the number of bad domains. With Lemma \ref{lemma:thicknessineq}, we have the following theorem.

\begin{theorem}[Stipsicz and Szabó, in preparation]\label{thm:baddomainbound}
Suppose that $B(D)$ denotes the number of bad domains in a diagram $D$, representing the knot $K$. Then, $th(K) \leq \frac{1}{2}B(D)$.
\end{theorem}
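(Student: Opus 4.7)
The plan is to invoke Lemma \ref{lemma:thicknessineq} to reduce the claim to proving $th(C_{D,p}) \leq \frac{1}{2}B(D)$, which in turn amounts to bounding $|\delta(\kappa)-\delta(\kappa')|$ for every pair of Kauffman states $\kappa,\kappa'\in K(D,p)$. Since the $\delta$-grading decomposes as $\delta(\kappa) = \sum_{c} \bigl(M(\kappa(c)) - A(\kappa(c))\bigr)$, this reduces to analyzing how changing the assigned quadrant at each crossing alters a sum of local contributions.

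First I would tabulate, from Figure \ref{fig:A and M gradings}, the four possible values of $\delta_c(q) := M(q) - A(q)$ over the quadrants $q$ at a single crossing $c$, split by crossing sign, to record the maximum local variation in $\delta$ contributed by a single crossing. Second, since a Kauffman state is a bijection $\kappa \colon Cr(D,p) \to Dom(D,p)$ that also specifies a quadrant at each crossing, the symmetric difference of two states $\kappa$ and $\kappa'$ can be encoded by a disjoint union of cycles in a bipartite graph whose vertices are crossings and domains and whose edges alternate between those used by $\kappa$ and those used by $\kappa'$. The difference $\delta(\kappa)-\delta(\kappa')$ is then the sum of local $\delta$-changes at the crossings appearing along these cycles.

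The central step is a local coherence claim for good domains: if a domain $\mathcal{D}$ appears as a vertex on one of these cycles and every edge on $\partial\mathcal{D}$ is good, then traversing $\mathcal{D}$ contributes nothing to $\delta(\kappa)-\delta(\kappa')$. I would prove this by inspecting Figure \ref{fig:A and M gradings} and showing that, at a good edge $e$ joining crossings $c_1$ and $c_2$, the two quadrants meeting $e$ (one at each endpoint) contribute $\delta$-values that telescope as one walks along $\partial\mathcal{D}$. This should follow because the definition of a good edge forces the over/under pattern at its two endpoints to be opposite, which in turn forces the local $A$ and $M$ contributions on either side of $e$ to be compatible.

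Given the coherence claim, the only nonzero contributions to $\delta(\kappa)-\delta(\kappa')$ come from bad domains traversed by the symmetric-difference cycles, and the per-crossing local bound from the first step implies that each bad domain contributes at most $\tfrac{1}{2}$. Summing over at most $B(D)$ such domains yields $|\delta(\kappa)-\delta(\kappa')|\leq \tfrac{1}{2}B(D)$, as required. The hardest part will be the coherence argument at good edges: it demands a careful case analysis over crossing signs, strand orientations, and the specific quadrant geometry dictated by Figure \ref{fig:A and M gradings}, together with verification that the bijectivity of $\kappa$ and $\kappa'$ forces the quadrants on either side of each good edge to line up with compatible $\delta$-contributions.
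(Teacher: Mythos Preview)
The paper does not contain a proof of this theorem: it is stated as a result of Stipsicz and Szab\'o from a manuscript in preparation, and is simply quoted and then applied in Proposition~\ref{KTConwayThickness}. There is therefore no proof in the paper against which to compare your proposal.

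As an independent comment on your outline, the overall architecture is the natural one: reduce via Lemma~\ref{lemma:thicknessineq} to the chain-level thickness, then localize the $\delta$-difference of two Kauffman states over crossings and domains. The part that is genuinely delicate, and which your proposal flags but does not carry out, is the ``coherence claim'' for good domains. As written it is not quite right to say that a good domain traversed by a swap-cycle contributes nothing; what one actually needs is that all quadrants of a fixed good domain carry the \emph{same} local $\delta$-value, so that changing which crossing occupies that domain cannot change $\delta$. This is a statement about a single domain and its incident crossings (checked by walking around $\partial\mathcal{D}$ along good edges and tracking how the over/under pattern forces the quadrant labels), not about pairs of states. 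Once that is established, the difference $\delta(\kappa)-\delta(\kappa')$ is supported on the bad domains, and a direct check of the four quadrant values at a single crossing shows the spread in $\delta$ across the quadrants of any one domain is at most~$1$; since thickness is integer-valued one then gets the bound $\lfloor \tfrac{1}{2}B(D)\rfloor$. Your ``at most $\tfrac12$ per bad domain'' is not literally what the local table gives, so that step would need to be reformulated.
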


Our final sufficient condition relies upon the notions of the thickness $th(K)$ and Seifert genus $g(K)$ of a knot $K$, which are only lightly treated here. The reader is referred to \cite{Hanselman} for further details. 

\begin{theorem}[Hanselman, \cite{Hanselman}]\label{thm:Hanselman} 
If a nontrivial knot $K$ has $th(K)\leq 5$ and $g(K) \neq 2,$ then the PCSC holds for $K.$
\end{theorem}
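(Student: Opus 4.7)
The plan is to prove the contrapositive by extracting quantitative obstructions from Heegaard Floer homology. Assume for contradiction that a nontrivial knot $K$ with $th(K) \leq 5$ and $g(K) \neq 2$ admits a purely cosmetic surgery $S^3_r(K) \cong S^3_s(K)$ with $r \neq s$. The first step is to invoke the classical reductions: work of Boyer--Lines and Ni--Wu pins down the candidate slopes, forcing $s = -r$ and $r = \pm 2/q$ for some positive integer $q$, together with the vanishing of $\Delta_K''(1)$ (which Lemma \ref{relationship} transfers to $V_K''(1)$) and vanishing of the Ozsv\'ath--Szab\'o concordance invariant. These constraints rule out $g(K) = 1$, so I may assume $g(K) \geq 3$, and the entire problem reduces to bounding $q$.

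Second, I would compute the Ozsv\'ath--Szab\'o correction terms of the surgered manifolds $S^3_{\pm 2/q}(K)$ from the filtered knot Floer complex of $K$. An orientation-preserving diffeomorphism $S^3_{2/q}(K) \cong S^3_{-2/q}(K)$ forces these correction terms, together with their refinements over all spin$^c$ structures, to agree as indexed families. The cleanest route is to reformulate the bordered Floer invariant of the knot complement as a collection of immersed multicurves in the punctured torus of $\partial(S^3\setminus\nu K)$, following Hanselman--Rasmussen--Watson, so that correction term data become geometric counts of intersections between these curves and a Dehn-filling slope of slope $\pm 2/q$.

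Third, I would use the structural fact that this multicurve decomposes into a principal component whose horizontal wrapping is exactly $2g(K)$ (coming from the extremal Alexander-graded summand of $\widehat{\text{HFK}}(K)$) plus auxiliary components whose total vertical spread is controlled by $th(K)$. Careful bookkeeping of how the principal and auxiliary components each contribute to the intersections with the two slopes $\pm 2/q$, together with the required symmetry of the resulting correction term sequences, yields an inequality of the shape
\begin{equation*}
q \,\leq\, \frac{2g(K) + th(K)}{2g(K)(g(K) - 1)}.
\end{equation*}
Plugging in $th(K) \leq 5$ and $g(K) \geq 3$, the right-hand side is at most $11/12 < 1$, contradicting $q \geq 1$, which completes the argument.

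The main obstacle is the Heegaard Floer computation in the middle step: packaging the filtered knot Floer complex of $K$ into a geometric object whose interactions with an arbitrary rational surgery slope are combinatorially tractable and genuinely controlled by just $g(K)$ and $th(K)$. Once that reformulation is available, the final inequality is a counting argument driven entirely by those two parameters, and the exclusion of $g(K) = 2$ from the hypothesis is forced: it is precisely the borderline case in which $\tfrac{2g+th}{2g(g-1)}$ can meet or exceed $1$ even for $th \leq 5$, and so the multicurve obstruction cannot eliminate $q = 1$.
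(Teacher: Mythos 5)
The first thing to say is that the paper does not prove this statement at all: Theorem \ref{thm:Hanselman} is imported verbatim from \cite{Hanselman}, so there is no internal proof to compare against. Your outline does track Hanselman's actual published argument --- the Boyer--Lines and Ni--Wu reductions, the reformulation of the knot Floer complex as immersed multicurves in the punctured torus following Hanselman--Rasmussen--Watson, and a final inequality bounding the denominator $q$ of the surgery slope by an expression in $g(K)$ and $th(K)$ that drops below $1$ once $th(K)\leq 5$ and $g(K)\geq 3$. But as written this is a plan rather than a proof, and the decisive step is missing: the claim that matching correction terms over all spin$^c$ structures of $S^3_{r}(K)$ and $S^3_{-r}(K)$, read off from intersections of the multicurve with the two filling slopes, forces $q \leq \frac{2g(K)+th(K)}{2g(K)(g(K)-1)}$ is precisely the technical content of Hanselman's paper, and you leave it as a black box (indeed you flag it as the main obstacle). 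Nothing in your sketch explains why the auxiliary components' contribution is controlled by $th(K)$ rather than by, say, the total rank of $\widehat{\text{HFK}}(K)$. You also misattribute the slope restriction: Ni--Wu give slopes $\pm p/q$ with $q^2\equiv -1 \pmod{p}$, not $\pm 2/q$; the reduction to numerator $1$ (with the exceptional pair $\pm 2$ occurring only when $g(K)=2$, which is exactly why that genus is excluded from the hypothesis) is part of Hanselman's own contribution, not a classical input.

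Separately, your dismissal of $g(K)=1$ does not follow from the constraints you list. Boyer--Lines and Ni--Wu give $\Delta_K''(1)=0$ and vanishing of the concordance invariant, and a genus-one knot can satisfy both; moreover your final inequality is vacuous at $g=1$ because the denominator $2g(g-1)$ vanishes. Hanselman disposes of this case by citing Wang's separate theorem that genus-one knots admit no purely cosmetic surgeries; some such additional input must be supplied before you are entitled to assume $g(K)\geq 3$.
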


\section{Verification of the conjecture}
Let us first introduce the Kinoshita-Terasaka knots, and record some basic facts about them. These knots are parametrized by integers $r,n,$ with $KT_{r,n}$ denoting the knot of the form shown in Figure \ref{fig:KT}. It can be shown that $K_{r,n}$ is isotopic to the unknot if and only if $r \in \{0, 1, -1, -2 \}$ or $n=0$. Also, by turning the knot inside out, one can observe a symmetry which identifies 
\begin{equation} \label{KTsym}
    KT_{r,n}= KT_{-r-1,n}.
\end{equation}
Finally, we note that the reflection of $KT_{r,n}$ is the knot $KT_{r,-n}$. For brevity's sake, we will not say more about these knots; however no other facts than these are used in our proof. The reader is referred to \cite{Gabai}, \cite{KinoshitaTerasaka}, and \cite{OzsvathAndSzaboTwo} for further information. 

\begin{figure}
    \centering
    \includegraphics[scale=0.4]{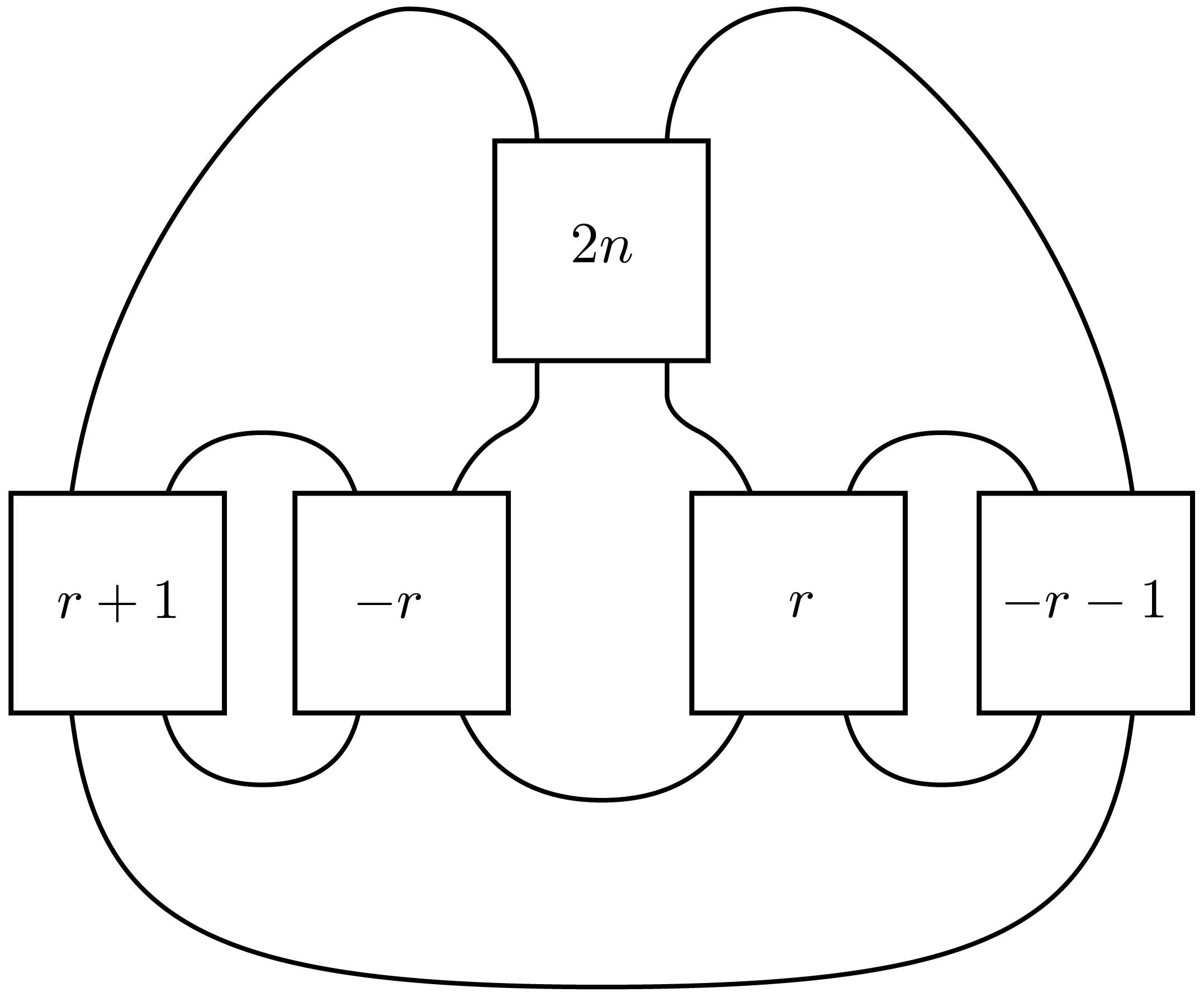}
    \caption{The Kinoshita-Terasaka knot family.}
    \label{fig:KT}
\end{figure}

Now, we turn to the Conway knots, which are also parametrized by integers $r,n,$ with $C_{r,n}$ denoting the knot of the form shown in Figure \ref{fig:Conway}. Additionally, Kinoshita-Terasaka and Conway knots satisfy the many of the same relations; in particular we have that 
\begin{equation} \label{Conwaysym}
    C_{r,n}= C_{-r-1,n}
\end{equation}
and that the reflection of $C_{r,n}$ is the knot $C_{r,-n}$. Also, for $r \in \{ 0, 1, -1, -2\}$ or $n=0$, $C_{r,n}$ is isotopic to the unknot, and hence is excluded in our discussion of the PCSC.
\begin{figure}
    \centering
    \includegraphics[scale=0.4]{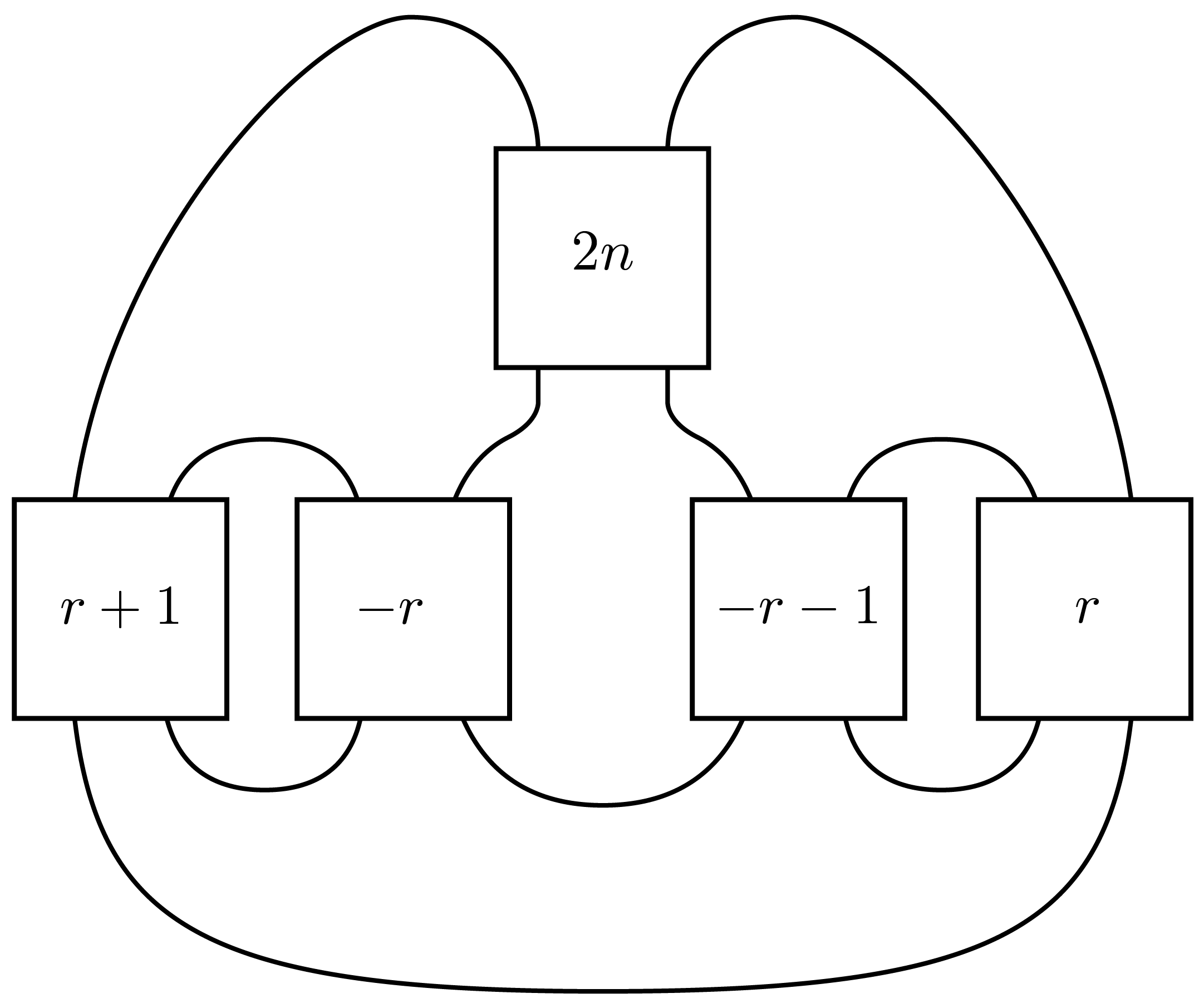}
    \caption{The Conway knot family.}
    \label{fig:Conway}
\end{figure}
With the Kinoshita-Terasaka and Conway knots described, we can begin the proof of Theorem \ref{thm:PCSCKTConway}.

\subsection{Thickness of the Kinoshita-Terasaka and Conway knots}\label{thickness}
With a view towards applying the theorem of Hanselman (Theorem \ref{thm:Hanselman}), we can begin by bounding the thickness of the Kinoshita-Terasaka and Conway knot families by above. 
\begin{proposition}\label{KTConwayThickness}
The thickness of the Kinoshita-Terasaka and Conway knots is at most 2.
\end{proposition}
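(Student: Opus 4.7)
The plan is to invoke Theorem~\ref{thm:baddomainbound} (Stipsicz--Szabó): if we can exhibit, for each nontrivial $KT_{r,n}$ and $C_{r,n}$, a diagram with at most $4$ bad domains, then $th(K) \leq \tfrac{1}{2}\cdot 4 = 2$ as desired. The standard diagrams shown in Figures~\ref{fig:KT} and \ref{fig:Conway} are the natural candidates, and the symmetries \eqref{KTsym} and \eqref{Conwaysym}, together with the fact that reflection sends $KT_{r,n}\mapsto KT_{r,-n}$ (and similarly for $C_{r,n}$) and preserves the knot Floer homology up to a grading flip (hence preserves thickness), let us reduce to the cases $r\geq 0,\ n>0$.

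My first step would be to orient these diagrams and carry out an edge-by-edge analysis of the two twist regions (the one with $r$ crossings and the one with $n$ crossings). The key observation to establish is that inside a twist region with a consistent orientation of strands, as one travels along a strand one alternates between being the overstrand and the understrand at successive crossings; consequently every edge interior to a twist region is good. Thus every domain lying strictly between two adjacent crossings in a twist region is bounded by good edges and is itself good. This already shows that the $r$- and $n$-parameter contributions produce no bad domains, so the count of bad domains is bounded by a constant independent of $r$ and $n$.

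The remaining work is a finite, localized check: identify the edges and domains at the \emph{boundary} between the twist regions and the fixed central tangle of the Kinoshita--Terasaka (respectively Conway) diagram, and at the fixed tangle itself, and count which domains there are bad. Since the fixed portion of each diagram has a bounded number of crossings and domains, the number of bad domains is an absolute constant; I expect to verify directly from the diagrams that this constant is at most $4$. The $KT$ and $C$ cases are treated in parallel because the Conway diagram is obtained from the Kinoshita--Terasaka diagram by a mutation on the fixed central tangle, which permutes and relabels the edges and domains but does not change the over/under pattern of any edge, so the count of bad domains is the same.

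The main obstacle is bookkeeping: one must be careful to treat the two signs of $r$ and $n$ uniformly (the symmetries \eqref{KTsym} and reflection reduce this, but one should still check that the orientation convention used to check good vs.\ bad is consistent), and one must verify that the boundary edges connecting the twist regions to the central tangle do not introduce extra bad domains as $r$ or $n$ grow; this is what makes the bound independent of the twist parameters. Once these checks are done, the estimate $B(D)\leq 4$ holds in every nontrivial case, and Theorem~\ref{thm:baddomainbound} gives $th(KT_{r,n}),\ th(C_{r,n}) \leq 2$.
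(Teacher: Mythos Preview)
Your approach is essentially the paper's own: apply Theorem~\ref{thm:baddomainbound} to the standard diagrams, observe that the bigon domains interior to each twist region are good (since following a strand through a twist box alternates over/under at consecutive crossings, making every internal edge good), and then bound $B(D)$ by the constant number of remaining domains in the fixed central portion of the diagram. The one discrepancy is your target of $B(D)\le 4$: the paper in fact isolates \emph{five} potentially bad domains outside the twist regions, obtains only $th(K)\le \tfrac{5}{2}$, and then invokes the integrality of thickness to conclude $th(K)\le 2$. So be prepared to fall back on that integrality observation if your direct count comes out to $5$ rather than $4$.
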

\begin{proof}
\begin{figure}
    \centering
    \includegraphics[scale=0.4]{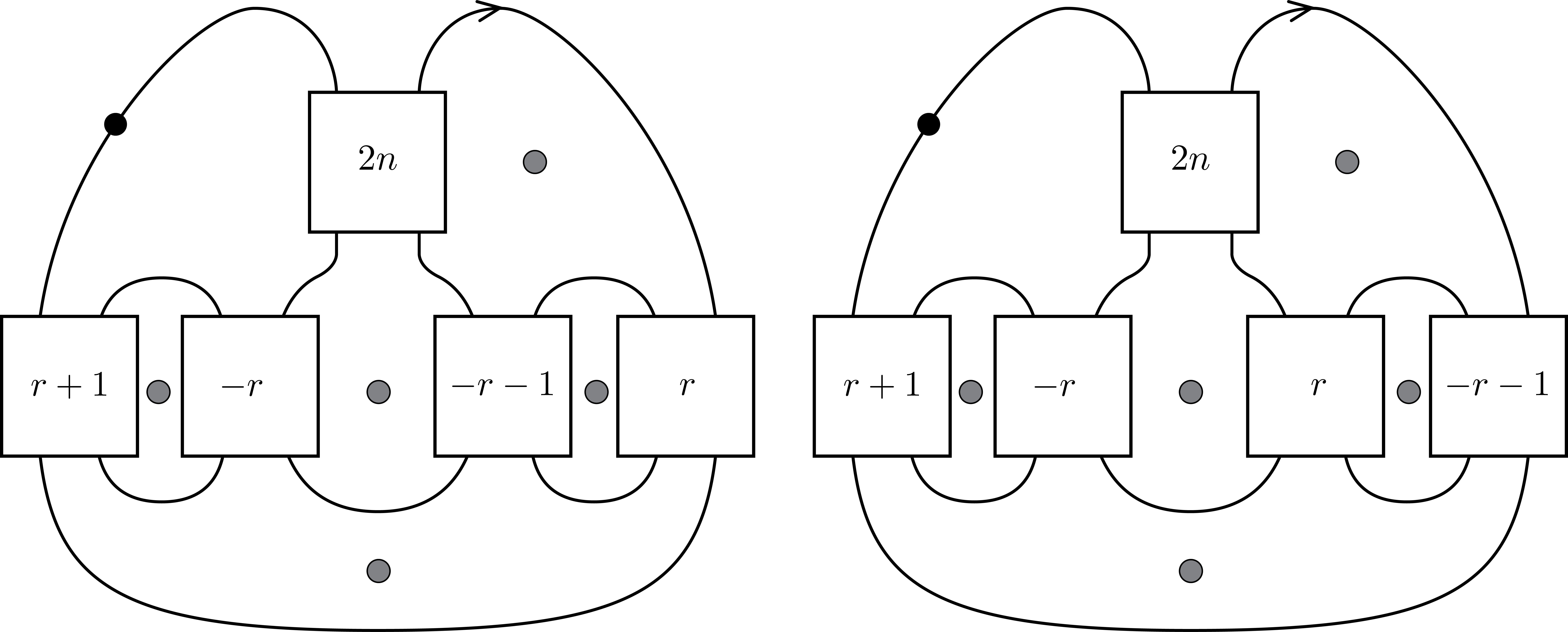}
    \caption{Marked diagrams of the Kinoshita-Terasaka and Conway knot families with domains marked in gray.}
    \label{fig:ktconwaymarked}
\end{figure}
Consider the marked diagrams for the Kinoshita-Terasaka and Conway knot families as illustrated in Figure \ref{fig:ktconwaymarked}. We can examine that for $n \neq 0$, $r>1$ (and hence $r<-2$ by symmetry, the values for which the knots are nontrivial), we have five marked domains, as well as those domains enclosed by the adjacent half-twists. Regardless of the orientation of the adjacent half-twists within the boxes, we have that any domain with the half-twist as a boundary is a good domain. Therefore, we have that $B(D) \leq 5$ from the five possible remaining domains. By Theorem \ref{thm:baddomainbound}, we have that $th(K) \leq \frac{1}{2} \cdot 5$. In particular, since the thickness of a knot must be integer-valued, we have that $th(K) \leq 2$.
\end{proof}

\subsection{Seifert genera of the Kinoshita-Terasaka and Conway knots}
The final element needed to complete our proof of Theorem \ref{thm:PCSCKTConway} is the notion of Seifert genus, which we will use to apply Hanselman's theorem to our knots. The Seifert genus $g(K)$ is defined as the minimal genus of a Seifert surface for the knot $K$. Using this notation, we have the following theorem of Gabai:
\begin{theorem}[Gabai, \cite{Gabai}]
For $r > 0,$ the Kinoshita-Terasaka knot $KT_{r,n}$ has Seifert genus $g(KT_{r,n}) =r$.
\end{theorem}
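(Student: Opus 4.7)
The plan is to establish $g(KT_{r,n}) = r$ by proving matching upper and lower bounds.

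For the upper bound $g(KT_{r,n}) \leq r$, I would exhibit an explicit Seifert surface of genus $r$ spanning $KT_{r,n}$. Using the standard diagram in Figure \ref{fig:KT}, one chooses an orientation of $KT_{r,n}$ and applies Seifert's algorithm to produce an oriented spanning surface $R$. Counting Seifert circles and bands, and computing $g(R) = (2 - \chi(R))/2$, should yield $g(R) = r$. In practice it is often cleaner to build $R$ by hand as a disk-and-bands surface adapted to the structure of the diagram, in order to control the Euler characteristic and orientability without tracking the algorithm crossing-by-crossing.

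For the lower bound $g(KT_{r,n}) \geq r$, the classical obstruction $g(K) \geq \deg \Delta_K / 2$ is useless here because $\Delta_{KT_{r,n}}(t) = 1$; this is precisely what makes the Kinoshita-Terasaka family a nontrivial testbed for genus detection. Gabai's original approach is a sutured manifold hierarchy: cut the knot exterior $S^3 \setminus \nu(KT_{r,n})$ along the candidate Seifert surface $R$ to obtain a sutured manifold, then produce a sequence of taut decompositions terminating in a disjoint union of balls with a single suture. Gabai's criterion then forces $R$ to minimize the Thurston norm, hence the genus. A modern alternative is to compute $\widehat{\mathrm{HFK}}(KT_{r,n})$ directly, identify its top nonzero Alexander grading as $r$, and invoke the Ozsv\'ath--Szab\'o theorem that knot Floer homology detects the Seifert genus.

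The main obstacle is entirely in the lower bound. Because the Alexander polynomial is trivial, no signature, Alexander-module, or classical polynomial invariant gives any information about $g$, so $KT_{r,n}$ is a priori indistinguishable from the unknot by these tools. In the sutured manifold approach, each decomposing surface in the hierarchy must be chosen to preserve tautness, and this demands intricate case analysis tailored to the specific planar structure of $KT_{r,n}$; this is where Gabai's paper does most of its work. The Floer-theoretic alternative avoids the hierarchy but imports a substantial analytic and combinatorial apparatus to even define the invariant, plus the nontrivial computation of $\widehat{\mathrm{HFK}}$ for this family. Either route makes plain why this result was historically out of reach until such machinery was available.
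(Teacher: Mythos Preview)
The paper does not prove this statement at all: it is stated as a result of Gabai and simply cited, with no argument given. Immediately afterward the paper also cites the Ozsv\'ath--Szab\'o knot Floer homology reproof and the companion genus computation for the Conway knots, again without proof. These results are used as black boxes to feed into Hanselman's criterion.

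Your proposal is therefore not comparable to anything in the paper, but it is an accurate high-level sketch of how the cited literature actually establishes the result: an explicit genus-$r$ Seifert surface for the upper bound, and either Gabai's taut sutured manifold hierarchy or the Ozsv\'ath--Szab\'o computation of the top Alexander grading of $\widehat{\mathrm{HFK}}(KT_{r,n})$ for the lower bound. You correctly identify that the Alexander polynomial is useless here and that all the content lies in the lower bound. As a proof \emph{proposal} it is fine as an outline, though of course neither route is carried out; each would require substantial work (the explicit hierarchy in Gabai's case, or the skein/mutation computation of $\widehat{\mathrm{HFK}}$ in the Ozsv\'ath--Szab\'o case) that you only gesture at.
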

In \cite{OzsvathAndSzaboTwo}, Ozsváth and Szabó used knot Floer homology to provide an alternate proof of this result, as well as describe the Seifert genera of Conway knots in the following way:
\begin{theorem}[Ozsváth and Szabó, \cite{OzsvathAndSzaboTwo}]
For $r > 0,$ the Conway knot $KT_{r,n}$ has Seifert genus $g(C_{r,n}) = 2r-1$.
\end{theorem}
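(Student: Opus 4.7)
The plan is to invoke the Ozsv\'ath--Szab\'o theorem that knot Floer homology detects the Seifert genus: for any knot $K$,
\[
g(K) = \max\bigl\{\, s \in \mathbb{Z} \bigm| \widehat{\text{HFK}}(K, s) \neq 0 \,\bigr\}.
\]
The problem therefore reduces to showing that the top Alexander grading in which $\widehat{\text{HFK}}(C_{r,n})$ is nonzero equals exactly $2r-1$. The Conway family is a delicate test case for this approach precisely because $\Delta_{C_{r,n}}(t) = 1$: all Euler-characteristic contributions from nonzero Alexander gradings must cancel in pairs, so the top-grading homology, while present, is entirely invisible to the Alexander polynomial.

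For the upper bound $g(C_{r,n}) \le 2r-1$, the natural first move is to exhibit an explicit Seifert surface of genus $2r-1$ for $C_{r,n}$ directly from the diagram in Figure \ref{fig:Conway}, for instance by applying Seifert's algorithm and carefully accounting for the Seifert circles and bands contributed by the $r$-tangle box and the $n$-twist box; the $r$-box should contribute the bulk of the genus, while the $n$-twist box can be isotoped to contribute none. For the matching lower bound, we turn to the knot Floer chain complex. Using a marked diagram $(D,p)$ of $C_{r,n}$ of the form in Figure \ref{fig:ktconwaymarked}, one enumerates the Kauffman states, computes their Alexander gradings via (\ref{localGradings}), and verifies that (i) no Kauffman state has $A(\kappa) > 2r-1$, and (ii) at least one homogeneous class in Alexander grading $2r-1$ survives to $\widehat{\text{HFK}}(C_{r,n}, 2r-1)$.

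The hardest step will be (ii). Because $\Delta_{C_{r,n}} = 1$, every generator in Alexander grading $2r-1$ is paired in Euler characteristic with a cancelling generator of opposite Maslov parity, so Euler-characteristic arguments alone cannot rule out total cancellation in homology. One must instead analyze the boundary map $\partial$ near the top Alexander grading on a carefully chosen portion of $(C_{D,p}, \partial)$. A workable route, following the strategy of \cite{OzsvathAndSzaboTwo}, is either to single out a distinguished Kauffman state $\kappa_{0}$ with $A(\kappa_{0}) = 2r-1$ that is geometrically isolated enough to force it to be a nontrivial cycle, or to combine the explicit Seifert surface from the upper bound with an adjunction-style inequality that forces the top-grading homology to be nonzero once the genus bound is sharp. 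This is the computationally and conceptually heaviest step and is where the full Heegaard--Floer machinery developed by Ozsv\'ath and Szab\'o is essential.
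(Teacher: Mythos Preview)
The paper does not prove this theorem at all: it is stated as a result of Ozsv\'ath and Szab\'o and cited to \cite{OzsvathAndSzaboTwo}, then used as a black box in the subsequent lemma. There is therefore no ``paper's own proof'' to compare your proposal against.

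That said, your outline is a reasonable sketch of the strategy actually carried out in \cite{OzsvathAndSzaboTwo}: the genus-detection theorem for $\widehat{\mathrm{HFK}}$ reduces the problem to locating the top nonvanishing Alexander grading, and the real work lies in showing $\widehat{\mathrm{HFK}}(C_{r,n},2r-1)\neq 0$ despite the trivial Alexander polynomial. However, what you have written is a plan rather than a proof. You correctly flag step~(ii) as the crux, but then defer it entirely (``this is where the full Heegaard--Floer machinery \ldots\ is essential''); neither of the two routes you mention---isolating a distinguished Kauffman state, or an adjunction-style argument---is executed, and the actual computation in \cite{OzsvathAndSzaboTwo} requires a specific skein-theoretic long exact sequence argument and explicit identification of the top group, not merely the existence of some state with $A(\kappa)=2r-1$. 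If you intend this as a self-contained proof rather than a reference, that step must be carried out in full.
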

\begin{lemma} No non-trivial Conway knot admits a purely cosmetic surgery. Furthermore, for $r\neq 2,$ no nontrivial Kinoshita-Terasaka knot $KT_{r,n}$ admits a purely cosmetic surgery.
\end{lemma}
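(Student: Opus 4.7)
The plan is to apply Hanselman's theorem (Theorem \ref{thm:Hanselman}) directly. We already have the thickness bound $th(K) \leq 2 \leq 5$ from Proposition \ref{KTConwayThickness} for both families, so the only remaining hypothesis to check is $g(K) \neq 2$. Thus the entire argument reduces to inspecting the Seifert genera provided by Gabai's and Ozsváth--Szabó's theorems, together with the symmetries \eqref{KTsym} and \eqref{Conwaysym} needed to handle the cases $r < -2$.

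For the Conway knots, I would first note that we need only consider $r \geq 2$ or $r \leq -3$ (together with $n \neq 0$) since the other cases give the unknot. For $r \geq 2$, Ozsváth--Szabó's formula gives $g(C_{r,n}) = 2r-1$, which is odd and hence never equal to $2$. For $r \leq -3$, the symmetry $C_{r,n} = C_{-r-1,n}$ with $-r-1 \geq 2$ gives $g(C_{r,n}) = 2(-r-1) - 1 = -2r - 3$, again odd. Since $g(C_{r,n})$ is odd for every nontrivial Conway knot, Hanselman's theorem applies to all of them.

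For the Kinoshita-Terasaka knots, the same reasoning yields $g(KT_{r,n}) = r$ for $r \geq 2$ and, by the symmetry $KT_{r,n} = KT_{-r-1,n}$, $g(KT_{r,n}) = -r-1$ for $r \leq -3$. Therefore $g(KT_{r,n}) = 2$ occurs exactly when $r = 2$ or $r = -3$, which are identified by \eqref{KTsym}. For every other choice of $r$ (with $n \neq 0$), the genus avoids $2$, so Hanselman's theorem again applies and rules out any purely cosmetic surgery.

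No step is technically difficult; the mild subtlety is just to record the reduction to $r \geq 2$ via the symmetry relations so that the genus formulas (which are only stated for $r > 0$) can be invoked. The one case genuinely not addressed by this lemma is $r = 2$ (equivalently $r = -3$) for the Kinoshita-Terasaka family, where the genus equals $2$ and Hanselman's hypothesis fails; that residual case will have to be handled by a different method, presumably via the Jones polynomial criterion of Theorem \ref{IchiharaWuThm} in a subsequent step of the paper.
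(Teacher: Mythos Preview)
Your proposal is correct and follows essentially the same approach as the paper: both combine the thickness bound of Proposition \ref{KTConwayThickness} with the genus formulas of Gabai and Ozsv\'ath--Szab\'o (extended to $r \leq -3$ via the symmetries \eqref{KTsym} and \eqref{Conwaysym}) in order to invoke Hanselman's Theorem \ref{thm:Hanselman}. The only cosmetic difference is that the paper observes $g(K) > 2$ in all the relevant cases, whereas you note for the Conway family that $g(C_{r,n}) = 2r-1$ is odd; either observation suffices to verify $g(K) \neq 2$.
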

\begin{proof}
As a consequence of the theorems of Gabai and Ozsváth and Szabó, we note by applying the symmetries from Equations \ref{KTsym} and \ref{Conwaysym} that all nontrivial Kinoshita-Terasaka knots with $r \neq 2$ (and hence $r \neq -3$) and all nontrivial Conway knots have $g(K) > 2$. Combining Theorem \ref{thm:Hanselman} and Proposition \ref{KTConwayThickness}, we thus have that no purely cosmetic surgeries are admitted by $C_{r,n}$ for all $r,n\in \mathbb{Z},$ nor by $KT_{r,n}$ for $r\neq 2,$ $n\in \mathbb{Z}.$
\end{proof}
We handle the remaining case $r=2$ in the following section.

\subsection{Verification for \texorpdfstring{$KT_{2,n}$}{Lg}}

Notice that $KT_{2,0}$ is isotopic to the unknot $u$, so it remains to prove that the PCSC holds for $KT_{2,n}$ for any $n\in\mathbb{Z}^{*}$. We will show this by checking the sufficient condition given in Theorem \ref{IchiharaWuThm}.

\begin{lemma} \label{lemma:nonzerothirdderiv}
$V_{K T_{2, n}}^{\prime \prime \prime}(1) \neq 0 \quad$ for any $n \in \mathbb{N}^{*}$.
\end{lemma}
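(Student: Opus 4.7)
The strategy is to use the Jones skein relation (Equation \ref{SkeinRelationV}) to derive a recurrence for $V_{KT_{2,n}}(q)$ in the twist parameter $n$, and then to extract from this recurrence the behavior of the third derivative at $q = 1$.

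Concretely, I would apply the skein relation at a single crossing $c$ inside the $n$-twist box in the diagram of $KT_{2,n}$ from Figure \ref{fig:KT}. With appropriate orientation, $c$ is a positive crossing $L_+$; the reversed crossing $L_-$ creates a canceling pair with its neighbor in the twist box and hence, after a Reidemeister II move, represents $KT_{2,n-2}$; and the oriented smoothing $L_0$ yields a two-component link $L'_n$ obtained by smoothing one crossing of the twist box. Inserting this into the skein relation gives a recurrence of the form
\begin{equation*}
V_{KT_{2,n}}(q) \,=\, \alpha(q)\, V_{KT_{2,n-2}}(q) \,+\, \beta(q)\, V_{L'_n}(q),
\end{equation*}
with $\alpha(q), \beta(q)$ fixed Laurent polynomials coming from the skein coefficients. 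Applying the skein relation again to $L'_n$ (or directly iterating through the remaining crossings of the twist box) produces a closed-form expression for $V_{KT_{2,n}}(q)$ in terms of the Jones polynomials of the finitely many links obtained by fully resolving the twist box, together with the initial data $V_{KT_{2,0}}(q) = 1$ (since $KT_{2,0}$ is the unknot) and $V_{KT_{2,1}}(q)$, which can be computed directly from the diagram.

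To extract $V_{KT_{2,n}}'''(1)$, I would differentiate the recurrence three times and evaluate at $q = 1$. This uses the standard facts that $V_K(1) = 1$ and $V_K'(1) = 0$ for any knot $K$, and $V_L(1) = -2$ for any two-component link $L$, together with the values of the first few derivatives of the Jones polynomial of the fixed resolved links (computable once and for all). The result is a linear recurrence (coupled with recurrences for $V_{KT_{2,n}}''(1)$ and lower derivatives) for the sequence $a_n := V_{KT_{2,n}}'''(1)$, which can be solved explicitly to yield $a_n$ as a polynomial (or quasi-polynomial) in $n$. The lemma then follows by checking that this polynomial has no positive integer roots, e.g., by inspecting its leading coefficient and degree, or by a congruence argument modulo a small prime.

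The main obstacle is the combinatorial bookkeeping: correctly identifying the intermediate links produced by the smoothings in the twist box, and in particular computing $V_{L'_n}(q)$ and its derivatives at $q = 1$ accurately enough for the recurrence. Once those inputs are pinned down, the derivation of the closed form and the final non-vanishing check reduce to routine linear algebra and polynomial inspection.
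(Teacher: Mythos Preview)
Your overall plan---set up a skein recurrence in the twist parameter and differentiate at $q=1$---is exactly what the paper does, but you miss the one observation that makes the computation clean. With the orientation the paper chooses (Figure~\ref{fig:KT2n}), changing the top crossing gives $L_- = KT_{2,n-1}$ (step size $1$, not $2$), and, more importantly, the oriented smoothing $L_0$ is \emph{independent of $n$}: once that crossing is resolved, the remaining crossings in the twist box become Reidemeister~I curls and untwist, leaving the fixed pretzel link $P(3,-2,2,-3)$. So there is no family $L'_n$ to further resolve; the skein relation immediately yields the first-order linear recurrence
\[
V_n - q^4 V_{n-1} = (q - q^3)\,V, \qquad V := V_{P(3,-2,2,-3)},
\]
which with $V_0 = 1$ solves in closed form to $V_n = q^{4n} + V\cdot P_n$ for an explicit Laurent polynomial $P_n$. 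Differentiating three times and inserting the once-and-for-all values $V(1)=2$, $V'(1)=0$, $V''(1)=-94$ gives $V_n'''(1) = 576n$, visibly nonzero for every $n\in\mathbb{N}^*$. Your route would eventually arrive at the same conclusion, but only after an extra layer of skein resolutions on the $L'_n$ and a coupled system of recurrences for their derivatives; recognizing that $L_0$ does not depend on $n$ eliminates all of that bookkeeping and yields the exact value rather than just a non-vanishing argument.
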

\begin{proof}
\begin{figure}
	\centering
	\includegraphics[scale=0.4]{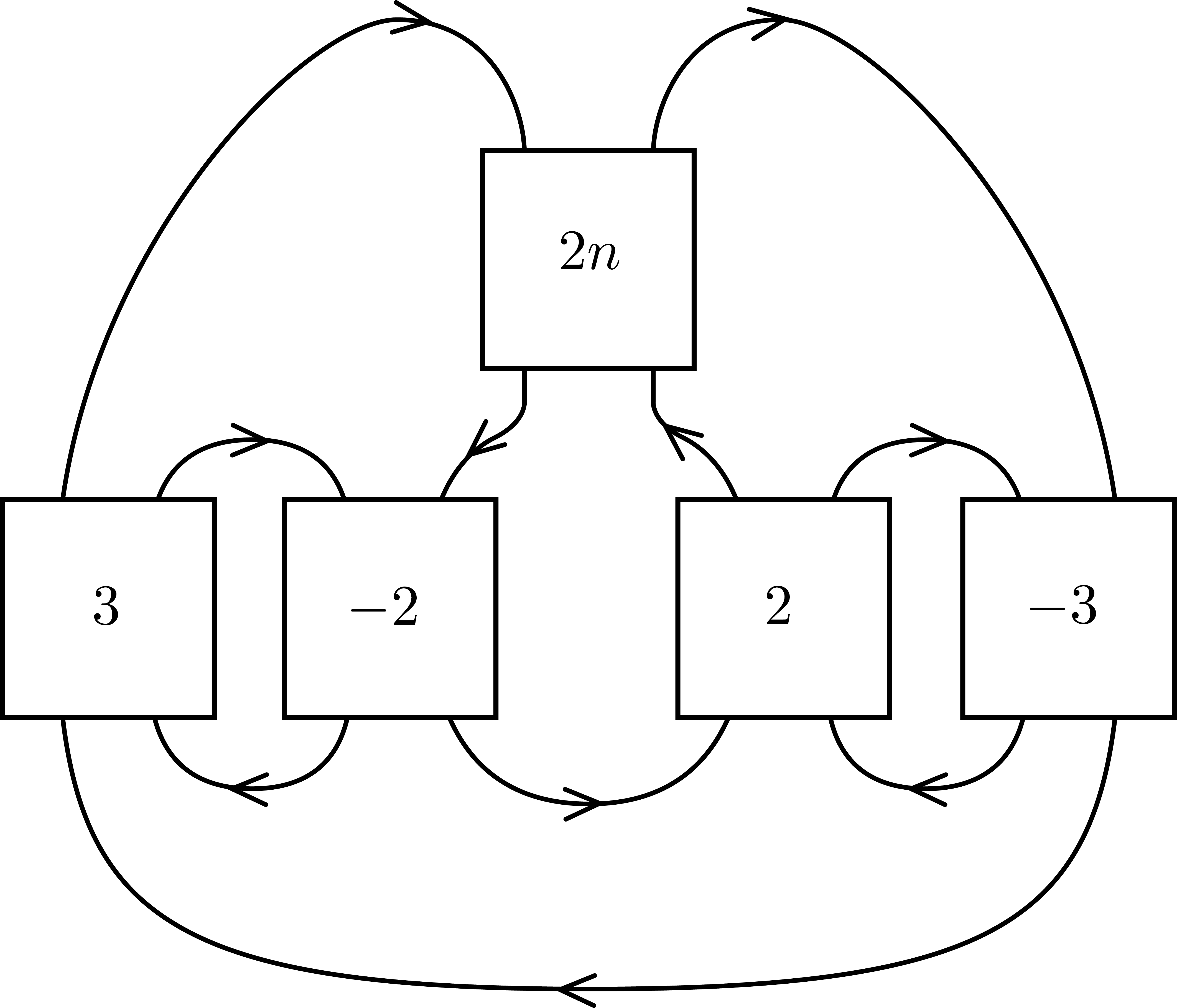}
	\caption{$KT_{2,n}$.}
	\label{fig:KT2n}
\end{figure}
Let $n$ be a non-negative integer, and orient $KT_{2,n}$ as shown in Figure \ref{fig:KT2n}. Consider the crossing on the top of $KT_{2,n}$ and consider the skein triple with $\Vec{L}_{+}\ :=\ KT_{2,n}$. It follows that  $\Vec{L}_{-}=KT_{2,n-1}$ and $\Vec{L}_0=P(3,-2,2,-3)$, where $P(3,-2,2,-3)$ is a pretzel link oriented as in Figure \ref{fig:pretzel}.

\begin{figure}
	\centering
	\includegraphics[width=0.8\textwidth]{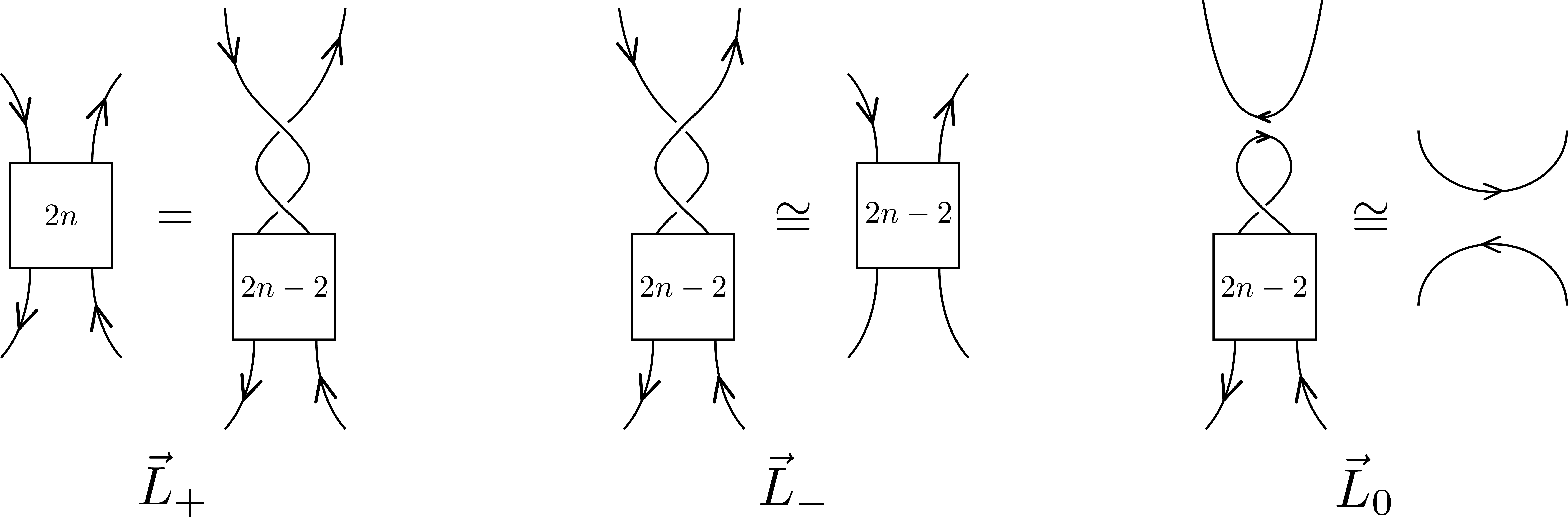}
	\caption{A skein triple for $KT_{2,n}$.}
\end{figure}
\begin{figure}
	\centering
	\includegraphics[scale=0.4]{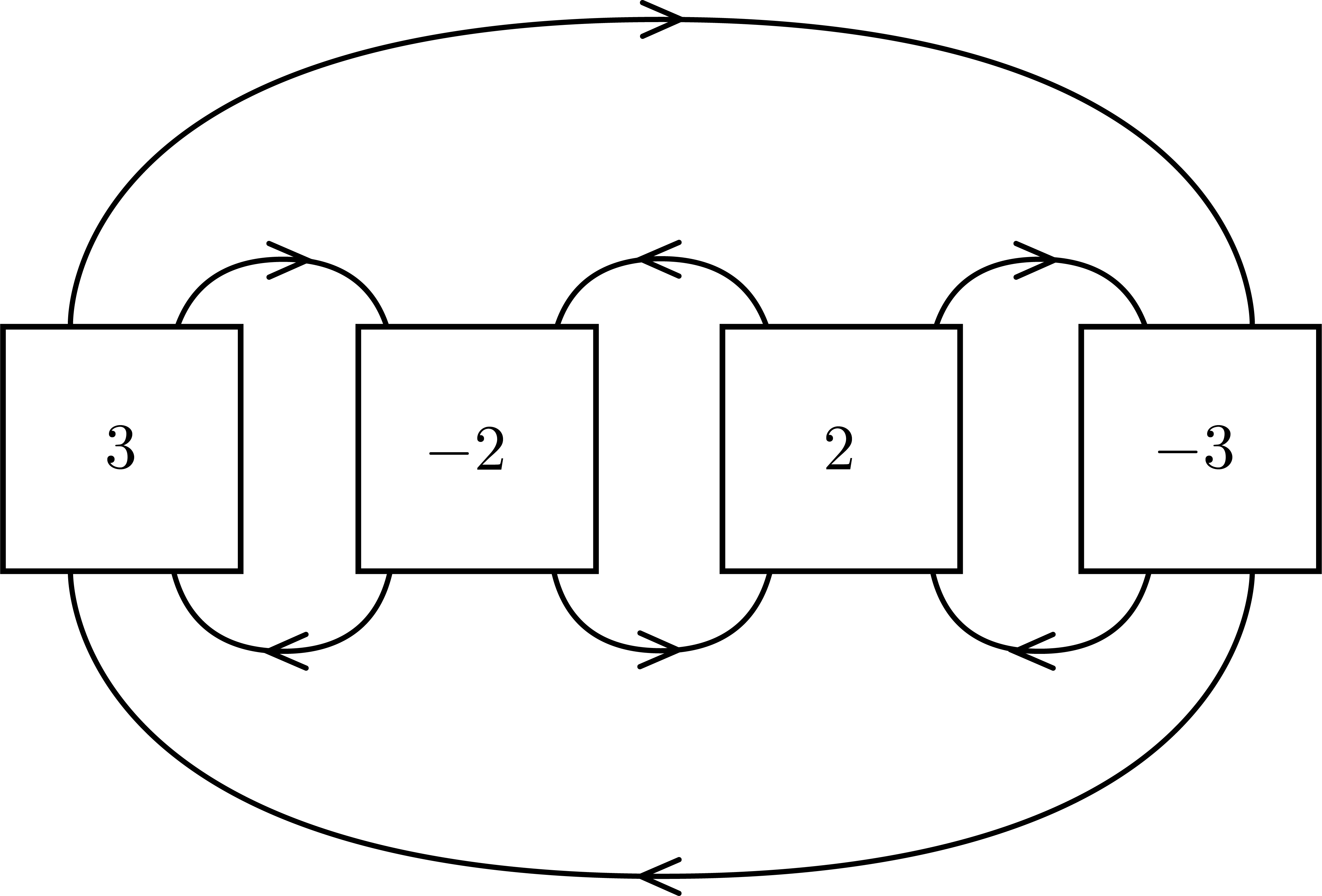}
	\caption{$P(3,-2,2,-3)$.}
	\label{fig:pretzel}
\end{figure}

Denote $V_{n}\ :=\ V_{KT_{2, n}}$ and $V\ :=\ V_{P(3,-2,2,-3)}$. By the skein relation, we have that 
$$
q^{-2} V_{n}-q^{2} V_{n-1}=\left(q^{-1}-q\right) V,
$$
\text{and equivalently that}
\begin{equation} \label{LDE}
V_{n}-q^{4} V_{n-1}=\left(q-q^{3}\right) V.
\end{equation}

Solving the linear difference equation \ref{LDE} with boundary condition
$$
V_{0}=V_{K T_{2,0}}=V_{u}=1,
$$

we obtain that 
\begin{equation} \label{Sol1}
V_{n}=\left(1-\dfrac{V}{q+q^{-1}}\right) q^{4 n}+\dfrac{V}{q+q^{-1}}=q^{4 n}+\dfrac{1-q^{4 n}}{q+q^{-1}} V.
\end{equation}

Computing the Jones polynomial of the pretzel link using the skein relation, we have that
$$
V=-q^{9}+q^{7}-q^{5}+q^{3}+q^1+q^{-1}+q^{-3}-q^{-5}+q^{-7}-q^{-9}.
$$

Also notice that 
$$
\begin{aligned}
1-q^{4 n}&=\left(q+q^{-1}\right) \sum_{k=1}^{2 n}(-1)^{k} q^{4 n-2 k+1}\\ &=\left(q+q^{-1}\right) \sum_{k=1}^{n}\left(q^{4n-4 k+1}-q^{4 n-4 k+3}\right)\\ &=\left(q+q^{-1}\right) \sum_{k=1}^{n}\left(q^{4k-3}-q^{4 k-1}\right).
\end{aligned}
$$

If we let $P_{n}\ :=\ \sum_{k=1}^{n}\left(q^{4 k-3}-q^{4k-1}\right)$, then Equation $\ref{Sol1}$ can be written as 
\begin{equation} \label{Sol2}
V_{n}=q^{4 n}+V\cdot P_{n}.\\
\end{equation}

Differentiating Equation $\ref{Sol2}$ using Leibniz's rule, we have that 
$$
V_{n}^{\prime \prime \prime}=\left(q^{4 n}\right)^{\prime \prime \prime}+V^{\prime \prime \prime} P_{n}+3 V^{\prime \prime} P_{n}^{\prime}+3 V^{\prime} P_{n}^{\prime \prime}+V P_{n}^{\prime \prime \prime}.
$$

Also, direct calculation of the derivatives of $V$ and $P_n$ at $q=1$ gives that 
$$
V(1)=2,\ V^{\prime}(1)=0,\ V^{\prime \prime}(1)=-94
$$ 
and 
$$
P_{n}(1)=0,\quad P_{n}^{\prime}(1)=\sum_{k=1}^{n}(4 k-3-(4 k-1))=-2 n,
$$
$$
\begin{aligned}
P_{n}^{\prime \prime \prime}(1) &=\sum_{k=1}^{n}((4 k-3)(4 k-4)(4 k-5)-(4 k-1)(4 k-2)(4 k-3)) \\
&=-6 \sum_{k=1}^{n}(4 k-3)^{2} \\
&=-64 n^{3}+48 n^{2}+4 n.
\end{aligned}
$$

Therefore, we conclude that
$$
\begin{aligned}
V_{n}^{\prime \prime \prime}(1) &=4 n(4 n-1)(4 n-2)+3 \cdot(-94) \cdot(-2 n)-64 n^{3}+48 n^{2}+4 n \\
&=64 n^{3}-48 n^{2}+8 n+564 n-64 n^{3}+48 n^{2}+4 n \\
&=576 n.
\end{aligned}
$$

This implies that $V_{n}^{\prime \prime \prime}(1) \neq 0,$ that is, $V_{K T_{2, n}}^{\prime \prime \prime}(1) \neq 0$ for any $n \in \mathbb{N}^{*}.$
\end{proof}
\begin{lemma} \label{lemma:mirror}
$V_{KT_{2,-n}}^{\prime \prime \prime}(1)=-V_{KT_{2,n}}^{\prime \prime \prime}(1)$ for any $n \in \mathbb{Z}^{*}$.
\end{lemma}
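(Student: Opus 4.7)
The plan is to exploit the fact, noted earlier in the excerpt, that the reflection of $KT_{r,n}$ is $KT_{r,-n}$, so that $KT_{2,-n} = m(KT_{2,n})$. Combined with the mirror identity $V_{m(K)}(q) = V_K(q^{-1})$ recalled in the subsection on the Jones polynomial, this reduces the lemma to a chain-rule calculation comparing $V_{KT_{2,n}}^{\prime\prime\prime}(1)$ with the third derivative of $V_{KT_{2,n}}(q^{-1})$ at $q=1$.

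Concretely, I would differentiate $V_{KT_{2,n}}(q^{-1})$ three times in $q$ and evaluate at $q=1$. Writing out the chain rule with $h(q)=q^{-1}$, so that $h'(1)=-1$, $h''(1)=2$, $h'''(1)=-6$, yields
\[
V_{KT_{2,-n}}^{\prime\prime\prime}(1)
= -V_{KT_{2,n}}^{\prime\prime\prime}(1)
- 6\, V_{KT_{2,n}}^{\prime\prime}(1)
- 6\, V_{KT_{2,n}}^{\prime}(1).
\]
(Equivalently, substituting the expansion $q^{-1}-1 = -(q-1)+(q-1)^2-(q-1)^3+O((q-1)^4)$ into the Taylor expansion of $V_{KT_{2,n}}$ at $1$ and reading off the coefficient of $(q-1)^3$ gives the same identity.)

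It then remains to kill the two correction terms. For the second-derivative term, I would invoke Lemma \ref{relationship} (Ichihara--Wu): since the Kinoshita--Terasaka family is defined by the property $\Delta_{KT_{2,n}}(t) = 1$, we have $\Delta_{KT_{2,n}}^{\prime\prime}(1) = 0$ and therefore $V_{KT_{2,n}}^{\prime\prime}(1) = -3\,\Delta_{KT_{2,n}}^{\prime\prime}(1) = 0$. For the first-derivative term, I would use the standard fact that $V_K^{\prime}(1) = 0$ for any knot $K$. This can be proved directly from the Jones skein relation: differentiating the skein relation once at $q=1$ and using that $V_L(1)$ depends only on the number of components of $L$ shows that $V^{\prime}_{L_+}(1) = V^{\prime}_{L_-}(1)$, i.e.\ $V^{\prime}(1)$ is invariant under crossing changes; combined with $V^{\prime}_u(1)=0$, this gives $V^{\prime}_{KT_{2,n}}(1) = 0$.

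Assembling these, the displayed identity collapses to $V_{KT_{2,-n}}^{\prime\prime\prime}(1) = -V_{KT_{2,n}}^{\prime\prime\prime}(1)$, as desired. I do not foresee any real obstacle; the step that needs the most care is simply computing the chain-rule coefficients correctly so that both the $V^{\prime}(1)$ and $V^{\prime\prime}(1)$ contributions can be eliminated using, respectively, the skein-relation identity and the Alexander-polynomial triviality of the Kinoshita--Terasaka knots.
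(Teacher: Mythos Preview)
Your proposal is correct and follows the same overall architecture as the paper's proof: use the mirror identity $V_{KT_{2,-n}}(q)=V_{KT_{2,n}}(q^{-1})$, expand the third derivative via the chain rule to obtain
\[
V_{KT_{2,-n}}^{\prime\prime\prime}(1) = -V_{KT_{2,n}}^{\prime\prime\prime}(1) - 6V_{KT_{2,n}}^{\prime\prime}(1) - 6V_{KT_{2,n}}^{\prime}(1),
\]
and then kill the two correction terms. The handling of $V_{KT_{2,n}}^{\prime\prime}(1)$ via Lemma~\ref{relationship} and $\Delta_{KT_{2,n}}\equiv 1$ is identical to the paper's.

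The one genuine difference is how $V_{KT_{2,n}}^{\prime}(1)=0$ is established. The paper does not invoke the general fact that $V_K'(1)=0$ for all knots; instead it recycles the explicit closed form $V_n = q^{4n} + V\cdot P_n$ derived in Lemma~\ref{lemma:nonzerothirdderiv} and evaluates $V_n'(1) = 4n + V'(1)P_n(1) + V(1)P_n'(1) = 4n + 0 + 2(-2n) = 0$ by direct substitution. Your route is cleaner and more conceptual (and makes the lemma logically independent of the computations in Lemma~\ref{lemma:nonzerothirdderiv}), while the paper's route is entirely self-contained within the quantities already computed. Either way the argument goes through without difficulty.
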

\begin{proof}
Take $n \in \mathbb{Z}^+$. Since $KT_{2,-n}$ is the mirror image of $KT_{2,n}$, we have that $V_{K T_{2, -n}}(q)=V_{K T_{2, n}}\left(q^{-1}\right)$.

Calculating $V^{\prime \prime \prime}_{KT_{2,-n}}(q)$ using the chain rule, we obtain that 
$$
V^{\prime \prime \prime}_{KT_{2, -n}}(q)=-6 q^{-4} V^{\prime}_{KT_{2, n}}\left(q^{-1}\right)-6 q^{-5} V^{\prime \prime}_{KT_{2,n}}\left(q^{-1}\right)-V^{\prime \prime \prime}_{KT_{2, n}}\left(q^{-1}\right).
$$

Therefore,
\begin{equation} \label{M1}
V^{\prime \prime\prime}_{KT_{2,-n}}(1)=-6V^{\prime}_{KT_{2,n}}(1)-6V^{\prime \prime}_{KT_{2,n}}(1)-V^{\prime \prime\prime}_{KT_{2,n}}(1).
\end{equation}

Using the results from Lemma \ref{lemma:nonzerothirdderiv}, we have that
\begin{equation} \label{M2}
V^{\prime}_{KT_{2,n}}(1)\equiv V^{\prime}_n(1)=4n+V^{\prime}(1)P_n(1)+V(1)P^{\prime}_n(1)=4n+0\cdot 0+2(-2n)=0.
\end{equation}

Also, since $\Delta_{KT_{2,n}}\equiv 1$, we have that $\Delta^{\prime\prime}_{KT_{2,n}}(1)= 0$. Notice that from Lemma \ref{relationship} we have the relation $V_{KT_{2,n}}^{\prime\prime}(1)=-3\Delta_{KT_{2,n}}^{\prime \prime}(1)$, we obtain that 
\begin{equation} \label{M3}
V_{KT_{2,n}}^{\prime \prime}(1)=0.
\end{equation}

Together, Equations \ref{M1}, \ref{M2} and \ref{M3} imply that for any $n \in \mathbb{Z}^+$,
\begin{equation}\label{M4}
V^{\prime\prime\prime}_{KT_{2,-n}}(1)=-V^{\prime\prime\prime}_{KT_{2,n}}(1).
\end{equation}

Due to the symmetry of identity \ref{M4}, we conclude that 
$V^{\prime\prime\prime}_{KT_{2,-n}}(1)=-V^{\prime\prime\prime}_{KT_{2,n}}(1)$ for any $n\in\mathbb{Z}^{*}.$
\end{proof}
\begin{lemma}
The PCSC holds for $KT_{2,n}$ for any $n\in\mathbb{Z}^{*}$.
\end{lemma}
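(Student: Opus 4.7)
The plan is to reduce the statement to the sufficient condition of Theorem \ref{IchiharaWuThm}: for any knot $K$, if either $V_K''(1) \neq 0$ or $V_K'''(1) \neq 0$, then the PCSC holds for $K$. It therefore suffices to verify that $V_{KT_{2,n}}'''(1) \neq 0$ for every $n \in \mathbb{Z}^{*}$, and all of the substantive computation needed for this has already been carried out in Lemmas \ref{lemma:nonzerothirdderiv} and \ref{lemma:mirror}.

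First I would treat the case $n > 0$: Lemma \ref{lemma:nonzerothirdderiv} gives the explicit value $V_{KT_{2,n}}'''(1) = 576n$, which is nonzero precisely because $n \in \mathbb{N}^{*}$. Then for $n < 0$, I would write $n = -m$ with $m \in \mathbb{N}^{*}$ and invoke Lemma \ref{lemma:mirror} (valid on all of $\mathbb{Z}^{*}$ by its concluding remark), which yields
\[
V_{KT_{2,-m}}'''(1) = -V_{KT_{2,m}}'''(1) = -576m \neq 0.
\]
In either case the hypothesis of Theorem \ref{IchiharaWuThm} is satisfied, so the PCSC holds for $KT_{2,n}$.

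Since the hard work is already packaged into the two preceding lemmas, I do not anticipate any real obstacle here; this final lemma is essentially a bookkeeping step that assembles the case split on the sign of $n$, after which Ichihara and Wu's criterion closes out the remaining case $r = 2$ and, together with the previous subsection, establishes Theorem \ref{thm:PCSCKTConway}.
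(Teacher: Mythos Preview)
Your proposal is correct and follows essentially the same approach as the paper: split on the sign of $n$, use Lemma~\ref{lemma:nonzerothirdderiv} for $n>0$ and Lemma~\ref{lemma:mirror} for $n<0$ to obtain $V_{KT_{2,n}}'''(1)\neq 0$, then apply Theorem~\ref{IchiharaWuThm}. The only cosmetic difference is that you quote the explicit value $576n$, whereas the paper simply invokes the nonvanishing.
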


\begin{proof}
From Lemma \ref{lemma:nonzerothirdderiv} we have that $V^{\prime\prime\prime}_{KT_{2,n}}(1)\neq 0$ for any $n\in\mathbb{Z}^+$. Take any $n\in\mathbb{Z}^-$, by Lemma \ref{lemma:mirror} we have that $V^{\prime\prime\prime}_{KT_{2,n}}(1)=-V^{\prime\prime\prime}_{KT_{2,-n}}(1)\neq 0$. This implies that $V^{\prime\prime\prime}_{KT_{2,n}}(1)\neq 0$ for any $n\in\mathbb{Z}^{*}$. Therefore, from Theorem \ref{IchiharaWuThm} we conclude that the PCSC holds for $KT_{2,n}$ for any $n\in\mathbb{Z}^{*}$.
\end{proof}
This computation completes our proof of Theorem \ref{thm:PCSCKTConway}.

\printbibliography

\vspace{10pt}
\noindent \textsc{Department of Mathematics and Statistics, Carleton College, Northfield, MN 55057} 

\noindent \textit{E-mail address:} \href{mailto:boehnkeb@carleton.edu}{\nolinkurl{boehnkeb@carleton.edu}}
\bigskip

\noindent \textsc{Department of Mathematics, Cornell University, Ithaca, NY, 14853} 

\noindent \textit{E-mail address:} \href{mailto:cg527@cornell.edu}{\nolinkurl{cg527@cornell.edu}}
\bigskip

\noindent \textsc{Department of Mathematical Sciences, Loughborough University, Loughborough, LE11 3TU, United Kingdom} 

\noindent \textit{E-mail address:} \href{mailto:H.Liu6-19@student.lboro.ac.uk}{\nolinkurl{H.Liu6-19@student.lboro.ac.uk}}
\bigskip

\noindent \textsc{Department of Mathematics and Statistics, Carleton College, Northfield, MN 55057} 

\noindent \textit{E-mail address:} \href{mailto:xues@carleton.edu}{\nolinkurl{xues@carleton.edu}}

\end{document}